\newtheorem{theorem}{Theorem}[section]
\newtheorem{proposition}[theorem]{Proposition}
\newtheorem{lemma}[theorem]{Lemma}
\theoremstyle{remark}
\newtheorem{remark}[theorem]{Remark}
\numberwithin{equation}{section}
\begin{document}

\title[Automorphism groups, Levi reduction and invariant connections]{Automorphism group of 
principal bundles, Levi reduction and invariant connections}

\author[Indranil Biswas]{Indranil Biswas}

\address{School of Mathematics, Tata Institute of Fundamental Research,
Homi Bhabha Road, Mumbai 400005, India and
Mathematics Department, EISTI-University Paris-Seine, Avenue du parc, 95000,
Cergy-Pontoise, France}

\email{indranil@math.tifr.res.in}

\author[Francois-Xavier Machu]{Francois-Xavier Machu}

\address{Mathematics Department, EISTI-University Paris-Seine, Avenue du parc, 95000, 
Cergy-Pontoise, France}

\email{fmu@eisti.eu}

\subjclass[2010]{14E20, 14J60, 53B15}

\keywords{Principal bundle, torus bundle, Levi reduction, adjoint bundle,
Hermitian-Einstein connection.}

\date{}

\begin{abstract}
Let $M$ be a compact connected complex manifold and $G$ a connected reductive complex affine
algebraic group. Let $E_G$ be a holomorphic principal $G$--bundle over $M$ and $T\, \subset\, G$ a
torus containing the connected component of the center of $G$. Let $N$ (respectively, $C$) be
the normalizer (respectively, centralizer) of $T$ in $G$, and let $W$
be the Weyl group $N/C$ for $T$. We prove that there is a
natural bijective correspondence between the following two:
\begin{enumerate}
\item Torus subbundles $\mathbb T$ of ${\rm Ad}(E_G)$ such that for some
(hence every) $x\, \in\, M$, the 
fiber ${\mathbb T}_x$ lies in the conjugacy class of tori in ${\rm Ad}(E_G)$ determined by $T$.

\item Quadruples of the form $(E_W,\, \phi,\, E'_C,\, \tau)$, where $\phi\, :\, E_W\,
\longrightarrow\, M$ is a principal $W$--bundle, $\phi^*E_G\, \supset\, E'_C\,
\stackrel{\psi}{\longrightarrow}\, E_W$ is a holomorphic
reduction of structure group of $\phi^* E_G$ to $C$, and
$$
\tau\,:\, E'_C\times N \, \longrightarrow\, E'_C
$$
is a holomorphic action of $N$ on $E'_C$ extending the natural action of $C$ on $E'_C$,
such that the composition $\psi\circ\tau$ coincides with the composition of the
quotient map $E'_C\times N\, \longrightarrow\, (E'_C/C)\times (N/C)\,=\,
(E'_C\times N)/(C\times C)$ with the natural map $(E'_C/C)\times (N/C)\, \longrightarrow\, E_W$.
\end{enumerate}
The composition of maps $E'_C\, \stackrel{\psi}{\longrightarrow}\, E_W \,
\stackrel{\phi}{\longrightarrow}\,
M$ defines a principal $N$--bundle on $M$. This principal $N$--bundle $E_N$ is
a reduction of structure group of $E_G$ to $N$. Given a complex connection $\nabla$ on $E_G$, we give a
necessary and sufficient condition for $\nabla$ to be induced by a connection on $E_N$. This
criterion relates Hermitian--Einstein connections on $E_G$ and $E'_C$ in a very precise manner.
\end{abstract}

\maketitle

\tableofcontents

\section{Introduction}\label{sec0}

Let $M$ be a compact connected complex manifold. Take a holomorphic vector bundle $E$ on $M$. In 
\cite{BCW} the following question was addressed: When is
the vector bundle $E$ the direct image of a vector bundle 
over an \'etale cover of $M$? The main result of \cite{BCW} described all possible way $E$ is 
realized as the direct image of a vector bundle over an \'etale cover of $M$.
The main result of \cite{BCW} says that they are 
parametrized by the subbundles of the adjoint bundle $\text{Ad}(E)\, \longrightarrow\, M$ whose 
fibers are tori. To explain this with more details, given any triple $(Y,\, \beta,\, F)$, where 
$\beta\, :\, Y\, \longrightarrow\, M$ is an \'etale covering ($Y$ need not be connected) and $F$ 
is a holomorphic vector bundle on $Y$ such that $E\,=\, \beta_*F$, we construct a torus sub-bundle
of $\text{Ad}(E)$; this sub-bundle is in fact the invertible part of $\beta_*{\mathcal O}_Y\, 
\subset\, \text{End}(\beta_*F)$. Conversely, given a sub-bundle of $\text{Ad}(E)$ with the 
typical fiber being a torus, we construct a triple $(Y,\, \beta,\, F)$ of the above form such 
that $E\,=\, \beta_*F$. In \cite{BM}, these results were generalized to the context of parabolic 
(orbifold) vector bundles over any Riemann surface; see \cite{DP} for a somewhat related question.

Our aim here is to formulate and address the question in the context of principal bundles.
Since direct image of a principal bundle does not quite make sense, a reformulation is warranted.

Let $G$ be a connected reductive complex affine algebraic group.
Fix a complex torus $T\, \subset\, G$ that contains the connected 
component, containing the identity element, of the center of $G$. Denote the normalizer 
(respectively, centralizer) of $T$ in $G$ by $N$ (respectively, $C$). This $C$ is a Levi factor
of a parabolic subgroup of $G$. The quotient $N/C$ is a finite group, which we shall
denote by $W$.

We prove the following (see Theorem \ref{thmse1}):

\begin{theorem}\label{thmi0}
Take a holomorphic principal $G$--bundle $E_G$ over $M$. There
is a natural bijective correspondence between the following two:
\begin{enumerate}
\item Torus subbundles $\mathbb T$ of ${\rm Ad}(E_G)$ such that for some
(hence every) $x\, \in\, M$, the 
fiber ${\mathbb T}_x$ lies in the conjugacy class of tori in ${\rm Ad}(E_G)$ determined by $T$.

\item Quadruples of the form $(E_W,\, \phi,\, E'_C,\, \tau)$, where $\phi\, :\, E_W\,
\longrightarrow\, M$ is a principal $W$--bundle, $\phi^*E_G\, \supset\, E'_C\,
\stackrel{\psi}{\longrightarrow}\, E_W$ is a holomorphic
reduction of structure group of the principal $G$--bundle $\phi^* E_G$ to the subgroup $C$, and
$$
\tau\,:\, E'_C\times N \, \longrightarrow\, E'_C
$$
is a holomorphic action of $N$ on $E'_C$ extending the natural action of $C$ on the principal
$C$--bundle $E'_C$, such that the diagram of maps
$$
\begin{matrix}
E'_C\times N & \stackrel{\tau}{\longrightarrow} & E'_C\\
\Big\downarrow && ~\Big\downarrow\psi \\
E_W\times W := (E'_C/C)\times (N/C)& \longrightarrow & E_W
\end{matrix}
$$
is commutative, where $E'_C\times N\, \longrightarrow\, (E'_C/C)\times (N/C)\,=\,
(E'_C\times N)/(C\times C)$ is the quotient map, and $\psi$ is the natural
projection.
\end{enumerate}
\end{theorem}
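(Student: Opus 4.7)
The plan is to factor the asserted bijection through the intermediate notion of a holomorphic reduction of structure group of $E_G$ to the subgroup $N\,\subset\, G$. First I would establish that torus subbundles $\mathbb T\,\subset\,{\rm Ad}(E_G)$ of the specified conjugacy class are in natural bijection with holomorphic $N$-reductions $E_N\,\subset\, E_G$. Second, I would establish that such $N$-reductions are in natural bijection with the quadruples $(E_W,\phi,E'_C,\tau)$ described in (2). The composite of these two bijections is the one asserted by the theorem.

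For the first correspondence, the key observation is that the set of tori in $G$ conjugate to $T$ is the homogeneous space $G/N$; hence a torus subbundle $\mathbb T\,\subset\,{\rm Ad}(E_G)$ of the required conjugacy type is the same thing as a holomorphic section of the associated fiber bundle $E_G\times_G(G/N)\,\longrightarrow\, M$, and such sections correspond bijectively to holomorphic reductions of the structure group of $E_G$ to $N$. Explicitly, $E_N$ consists of those $e\,\in\, E_G$ for which the group isomorphism $G\,\longrightarrow\,{\rm Ad}(E_G)_{\pi(e)}$, $g\,\longmapsto\,[e,g]$, sends $T$ onto $\mathbb T_{\pi(e)}$; conversely, $\mathbb T$ is recovered from $E_N$ as the image of $E_N\times_N T$ inside $E_N\times_N G\,=\, E_G\times_G G\,=\,{\rm Ad}(E_G)$, using that $N$ normalizes $T$.

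For the second correspondence, given $E_N$, I would set $E_W\,:=\, E_N/C$ with the induced holomorphic projection $\phi\,:\, E_W\,\longrightarrow\, M$ (a principal $W$-bundle since $N/C\,=\,W$); take $E'_C\,:=\, E_N$, regarded as a principal $C$-bundle over $E_W$ and embedded in $\phi^*E_G$ via $e\,\longmapsto\,([e],e)$; and let $\tau$ be the right $N$-action on $E_N$. The diagram commutes because $\psi(\tau(e,n))\,=\,[e\cdot n]\,=\,[e]\cdot[n]\,=\,\psi(e)\cdot[n]$, using normality of $C$ in $N$. Conversely, starting from a quadruple, the action $\tau$ together with the composition $E'_C\,\stackrel{\psi}{\longrightarrow}\, E_W\,\stackrel{\phi}{\longrightarrow}\, M$ endows $E'_C$ with the structure of a holomorphic principal $N$-bundle $E_N$ over $M$: freeness of the $N$-action follows from the diagram combined with freeness of the $W$- and $C$-actions, while transitivity on $M$-fibers follows from the corresponding transitivity of $W$ on $W$-fibers of $\phi$.

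The main technical step I expect is to check that the composition $E'_C\,\hookrightarrow\,\phi^*E_G\,\longrightarrow\, E_G$ arising from a quadruple is injective and $N$-equivariant, so that its image is a genuine $N$-reduction of $E_G$ and the $\tau$-action agrees with the $N$-action inherited from $G$. This has to be extracted from the compatibility of $\tau$ with the $C$-reduction structure together with the commutativity of the diagram in (2). Once this is done, the two constructions above are manifestly mutually inverse, holomorphicity of every map is clear from the data, and composition with the first correspondence yields the bijection asserted by the theorem.
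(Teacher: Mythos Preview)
Your plan is correct and mirrors the paper's own argument: the paper also factors the bijection through holomorphic $N$--reductions $E_N\subset E_G$, constructing $E_N$ exactly as your set of frames carrying $T$ onto $\mathbb T$ (their \eqref{en}), then setting $E_W:=E_N/C$ and $E'_C:=E_N$ with its tautological $N$--action (their Lemma~\ref{lem1} and diagram \eqref{e8}), and recovering $\mathbb T$ from a quadruple as $E_N\times_N T\,=\,(E_W\times T)/W$ (their \eqref{e10}). The only cosmetic difference is that you phrase the first step via sections of the associated $G/N$--bundle, whereas the paper works directly with the defining condition $E_N\times T\subset f_0^{-1}(\mathbb T)$; your identified ``main technical step'' is precisely their Lemma~\ref{lem3}.
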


If we set $G\,=\, \text{GL}(r, {\mathbb C})$ in Theorem \ref{thmi0}, then the
above mentioned result of \cite{BCW} is obtained.

Consider the composition of maps $E'_C\, \stackrel{\psi}{\longrightarrow}\, E_W \,
\stackrel{\phi}{\longrightarrow}\, M$ in (2). The action of $N$ on $E'_C$ and this
composition of maps together produce a holomorphic principal $N$--bundle over $M$. This
principal $N$--bundle over $M$ will be denoted by $E_N$. Since $\phi^*E_G\, =\, E_W\times_M E_G$,
we have a natural projection $\phi^*E_G\, \longrightarrow\, E_G$. Now using the composition of maps
\begin{equation}\label{inte}
E_N\, =\, E'_C\, \hookrightarrow\, \phi^*E_G \, \longrightarrow\, E_G\, ,
\end{equation}
the principal $N$--bundle $E_N$ is a holomorphic reduction of structure group of the
principal $G$--bundle $E_G$ to the subgroup $N\, \subset\, G$. Therefore, a complex connection
on $E_N$ induces a complex connection on $E_G$.

Given a complex connection on $E_G$, it is natural to ask whether it is induced by a
complex connection on $E_N$. 

We prove the following criterion for it (see Theorem \ref{thm1} and Remark \ref{rem-c}):

\begin{theorem}\label{thmi1}
Given a complex connection $D_0$ on $E_G$, let $D_1$ be the complex connection on the associated 
adjoint bundle ${\rm Ad}(E_G)$ induced by $D_0$. The connection $D_0$ on $E_G$ is induced by a 
complex connection on the principal $N$--bundle $E_N$ if and only if the induced connection 
$D_1$ on ${\rm Ad}(E_G)$ preserves the corresponding torus subbundle ${\mathbb T}\, \subset\, 
{\rm Ad}(E_G)$ in (1) of Theorem \ref{thmi0}.

When the connection $D_1$ on ${\rm Ad}(E_G)$ induced by $D_0$ preserves the torus subbundle
${\mathbb T}\, \subset\, {\rm Ad}(E_G)$, the connection $D_0$ is holomorphic if and only
if the connection on $E_N$ inducing $D_0$ is holomorphic.
\end{theorem}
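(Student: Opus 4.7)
The plan is to identify $E_N$, via the correspondence in Theorem~\ref{thmse1}, with the subset of $E_G$ consisting of those frames $p \in (E_G)_x$ whose induced isomorphism $G \to \mathrm{Ad}(E_G)_x$, $g \mapsto pgp^{-1}$, carries the fixed torus $T \subset G$ exactly onto the fiber $\mathbb{T}_x$. With this identification, the hypothesis that $D_1$ preserves $\mathbb{T}$ becomes the statement that parallel transport in $\mathrm{Ad}(E_G)$ carries $\mathbb{T}_x$ onto $\mathbb{T}_y$ for every $C^1$ path from $x$ to $y$ in $M$ (equivalently, since $\mathbb{T}_x$ is connected, the Lie subalgebra bundle $\mathrm{Lie}(\mathbb{T}) \subset \mathrm{ad}(E_G)$ is preserved by the connection on $\mathrm{ad}(E_G)$ induced by $D_0$). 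I would begin by making this translation carefully, as it is the conceptual bridge on which the rest of the argument rests.

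For the ``only if'' direction, I would use that $N$ normalizes $T$, so the associated fiber bundle $E_N \times_N T$ is well defined and is canonically identified with $\mathbb{T} \subset \mathrm{Ad}(E_G)$. A complex connection on $E_N$ induces a connection on $E_N \times_N T$ whose parallel transport is the restriction of that of $D_1$; hence $D_1$ preserves $\mathbb{T}$ automatically.

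For the ``if'' direction, I would show that $D_0$-parallel transport preserves the holomorphic subbundle $E_N \subset E_G$. Given a path $\gamma$ from $x$ to $y$, a point $p \in (E_N)_x$, and its horizontal lift endpoint $p' \in (E_G)_y$, conjugation by $p$ and by $p'$ intertwines $D_0$-parallel transport on $E_G$ with $D_1$-parallel transport on $\mathrm{Ad}(E_G)$. Hence the image of $T$ under conjugation by $p'$ equals the $D_1$-parallel transport of $\mathbb{T}_x$, which by assumption is $\mathbb{T}_y$, so $p' \in (E_N)_y$. The main obstacle here is the step from this invariance of $E_N$ under parallel transport back to a complex connection on $E_N$ inducing $D_0$: one must check that, in the complex-analytic setting, this is exactly encoded by the connection $1$-form of $D_0$ on $E_G$ taking values in $\mathfrak n \subset \mathfrak g$ when restricted to $E_N$, which is the standard characterisation of a reduction of a connection. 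Verifying this characterisation carefully in the complex category is the technical heart of the argument.

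For the final holomorphicity assertion, I would observe that once $D_0$ reduces to a form $\omega'$ on $E_N$, the form $\omega'$ takes values in $\mathfrak{n}$, and its image in $\mathfrak{g}$ under the holomorphic linear inclusion $\mathfrak{n} \hookrightarrow \mathfrak{g}$ is the restriction to $E_N$ of the $\mathfrak{g}$-valued connection $1$-form of $D_0$. Because $\mathfrak{n} \hookrightarrow \mathfrak{g}$ and $E_N \hookrightarrow E_G$ are both holomorphic, and $G$-equivariance propagates any holomorphic/non-holomorphic behaviour on $E_N$ to all of $E_G$, the form $\omega'$ is holomorphic if and only if the connection $1$-form of $D_0$ is holomorphic; the equivalence of the two notions of holomorphicity follows.
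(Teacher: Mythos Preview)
Your argument is correct but proceeds along a different line than the paper. You work geometrically with parallel transport: for the ``only if'' direction you note that $\mathbb{T}\simeq E_N\times_N T$ inherits a connection from any connection on $E_N$, and for the ``if'' direction you lift paths horizontally and use that conjugation by a frame intertwines parallel transport in $E_G$ with that in $\mathrm{Ad}(E_G)$, so that horizontal lifts of points in $E_N$ stay in $E_N$. The paper instead passes to the Lie algebra bundles and argues infinitesimally via the Leibniz identity $D'([s,t])=[D'(s),t]+[s,D'(t)]$: it characterises $\widetilde{\mathbb{T}}$ as the bundle of centers of $\widetilde{\mathcal{G}}=\mathrm{ad}(E_N)$ and, dually, $\widetilde{\mathcal{G}}$ as the bundle of centralisers of $\widetilde{\mathbb{T}}$ inside $\mathrm{ad}(E_G)$; the Leibniz rule then shows that preservation of one forces preservation of the other, and the fact that $\mathrm{Lie}(C)$ equals its own normaliser in $\mathrm{Lie}(G)$ is invoked to conclude that $D_0$ reduces to $E_N$. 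Your approach is more direct and avoids this last self-normalising step, while the paper's argument stays entirely within the category of vector bundles with covariant derivatives and never needs to integrate to parallel transport. For the holomorphicity statement, the paper phrases things via the Atiyah bundle (noting that $\mathrm{At}(E_N)$ is a holomorphic subbundle of $\mathrm{At}(E_G)$, so a holomorphic splitting that factors through it is automatically holomorphic), which is the dual formulation of your connection $1$-form argument; the two are equivalent.
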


A complex connection on $E_N$ defines a complex connection on the principal $C$--bundle
$E'_C$ on $E_W$, because $E_N\, =\, E'_C$ (see \eqref{inte}) and the map $\phi$ is \'etale.

Now assume that $M$ is K\"ahler; fix a K\"ahler form $\omega$ on $M$ in order to define degree
of a torsionfree coherent analytic sheaf on $M$. This enables us to define stable and
polystable principal $G$--bundles on $M$. Fix a maximal compact subgroup $K_G\, \subset\, G$
to define the Hermitian--Einstein equation for principal $G$--bundles. So $K_C\, :=\, C\bigcap K_G$
is a maximal compact subgroup of $C$. A holomorphic principal $G$--bundle on $M$
admits a Hermitian--Einstein connection if and only if it is polystable \cite{UY},
\cite{Do1}, \cite{RS}, \cite{AB}. The pulled back form $\phi^*\omega$ on $E_W$ is
K\"ahler. However, $E_W$ need not be connected. Polystable bundles and Hermitian--Einstein
connections on bundles over $E_W$ are defined in a suitable way.

We prove the following (see Proposition \ref{prop3}):

\begin{proposition}\label{prop3i}
Take $E_G$ and $E'_C$ as in Theorem \ref{thmi0}.
Assume that the principal $G$--bundle $E_G$ on $M$ is polystable. Let $\nabla$
be the Hermitian--Einstein connection on $E_G$. Then the following two
hold:
\begin{enumerate}

\item The principal $C$--bundle $E'_C$ on $E_W$ is polystable.

\item The Hermitian--Einstein connection $\phi^*\nabla$ on $\phi^*E_G$ preserves
the reduction $E'_C$ of structure group of the principal $G$--bundle $\phi^*E_G$ to
the subgroup $C\, \subset\, G$. Furthermore, the connection on $E'_C$
given by $\phi^*\nabla$ is Hermitian--Einstein.
\end{enumerate}
\end{proposition}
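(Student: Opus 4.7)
The strategy is to establish part (2) first and to derive part (1) from it. By Theorem~\ref{thmi1}, in order to show that $\phi^*\nabla$ preserves $E'_C$ it suffices to verify that the connection $D_1$ on ${\rm Ad}(E_G)$ induced by $\nabla$ preserves the torus subbundle $\mathbb{T}$. Granting this, $\nabla$ is induced by a complex connection $\nabla_N$ on $E_N$; since $\phi$ is \'etale, $\phi^*\nabla_N$ is a $C$--connection on $E'_C$ whose induced $G$--connection is $\phi^*\nabla$.

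To see that $D_1$ preserves $\mathbb{T}$, I pass to $E_W$ and exploit a triviality. Let $\underline{\mathfrak{t}}\subset{\rm ad}(E_G)$ denote the subbundle whose fiber at each $x\in M$ is the Lie algebra of the torus $\mathbb{T}_x$; since $\mathbb{T}$ is connected, $D_1$ preserves $\mathbb{T}$ if and only if it preserves $\underline{\mathfrak{t}}$. The torus $T$ is central in $C$, so the adjoint action of $C$ on $\mathfrak{t}$ is trivial, and consequently $\phi^*\underline{\mathfrak{t}}=E'_C\times_C\mathfrak{t}=E_W\times\mathfrak{t}$ is a holomorphically trivial subbundle of $\phi^*{\rm ad}(E_G)$. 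Moreover ${\rm ad}(E_G)$ is a polystable vector bundle of slope $0$ on $M$: as a $G$-module, $\mathfrak{g}$ decomposes as the trivial summand $\mathfrak{z}(\mathfrak{g})$ plus the adjoint representation of the semisimple quotient, and the latter has trivial determinant. Pulling back, $\phi^*{\rm ad}(E_G)$ is polystable of slope $0$ on $E_W$ and inherits the Hermitian--Einstein connection $\phi^*D_1$. Since the trivial subbundle $\phi^*\underline{\mathfrak{t}}$ has slope $0$, equal to that of $\phi^*{\rm ad}(E_G)$, the standard fact that an equal-slope holomorphic subbundle of a Hermitian--Einstein polystable vector bundle is preserved by the Hermitian--Einstein connection (via vanishing of the second fundamental form) implies that $\phi^*\underline{\mathfrak{t}}$ is $\phi^*D_1$-parallel. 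Because $\phi$ is surjective and \'etale, $D_1$ itself preserves $\underline{\mathfrak{t}}$, hence $\mathbb{T}$.

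It remains to check that the $C$--connection $\nabla'$ on $E'_C$ just produced is Hermitian--Einstein. Its curvature equals the restriction of $F_{\phi^*\nabla}$ to the reduction, so $\Lambda_{\phi^*\omega}F_{\nabla'}=\Lambda_{\phi^*\omega}F_{\phi^*\nabla}$. The Hermitian--Einstein condition on $\phi^*\nabla$ forces this contraction to be a constant central section of ${\rm ad}(\phi^*E_G)$, i.e.\ a section of the trivial subbundle $E_W\times\mathfrak{z}(\mathfrak{g})$. Since $\mathfrak{z}(\mathfrak{g})\subset\mathfrak{z}(\mathfrak{c})$ (any element central in $\mathfrak{g}$ is automatically central in the subalgebra $\mathfrak{c}$), the same section is central in ${\rm ad}(E'_C)$, so $\nabla'$ is Hermitian--Einstein on $E'_C$ with respect to $K_C=C\cap K_G$ and $\phi^*\omega$. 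Assertion (1) then follows at once because a principal bundle admitting a Hermitian--Einstein connection is polystable by the theorems \cite{UY,Do1,RS,AB} cited in the introduction (applied in the suitable way to the possibly disconnected base $E_W$).

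The main obstacle is the vector-bundle input invoked in the second paragraph: one needs the fact that an equal-slope holomorphic subbundle of a Hermitian--Einstein polystable bundle splits off as a parallel direct summand, applied over the possibly disconnected K\"ahler base $E_W$. The remaining ingredients---the slope computation for ${\rm ad}(E_G)$, compatibility of polystability and the Hermitian--Einstein condition under the finite \'etale pullback $\phi$, and the inclusion $\mathfrak{z}(\mathfrak{g})\subset\mathfrak{z}(\mathfrak{c})$---are routine.
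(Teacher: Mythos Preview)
Your proof is correct and follows the same overall architecture as the paper's---show that the connection on ${\rm Ad}(E_G)$ induced by $\nabla$ preserves $\mathbb{T}$, invoke Theorem~\ref{thmi1} to obtain a connection on $E_N$ (hence on $E'_C$), check that it is Hermitian--Einstein, and deduce polystability---but the key step, namely why $D_1$ preserves $\mathbb{T}$, is argued by a genuinely different mechanism. The paper observes that $T$, being central in $C$, acts on $E'_C$ and therefore on $\phi^*E_G$ by holomorphic bundle automorphisms; uniqueness of the Hermitian--Einstein connection then forces this $T$-action to preserve $\phi^*\nabla$, so each constant section of $E_W\times T\subset{\rm Ad}(\phi^*E_G)$ is covariantly constant, and hence $\mathbb{T}=(E_W\times T)/W$ is $D_1$-parallel. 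You instead pass to the adjoint vector bundle and use the equal-slope trick: $\phi^*\underline{\mathfrak{t}}\cong E_W\times\mathfrak{t}$ is a slope-$0$ holomorphic subbundle of the slope-$0$ Hermitian--Einstein bundle $\phi^*{\rm ad}(E_G)$, so the second fundamental form vanishes and the subbundle is parallel. The paper's route is more intrinsically group-theoretic and needs no vector-bundle input beyond uniqueness of the Hermitian--Einstein connection; your route is a standard K\"ahler-geometry manoeuvre that will feel natural to readers coming from the vector-bundle side, at the modest cost of invoking the polystability of ${\rm ad}(E_G)$ and the equal-slope splitting fact over a possibly disconnected base. The remainder of your argument (curvature comparison, $z(\mathfrak{k})\subset z(\mathfrak{k}_c)$, and the polystability conclusion) matches the paper's.
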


Let $z({\mathfrak k})$ denote the center of the Lie algebra of $K_G$. Let $z({\mathfrak k}_c)$ 
be the center of the Lie algebra of the maximal compact subgroup $K_C \,=\, K_G\bigcap C$ of 
$C$. We have $z({\mathfrak k})\, \subset\, z({\mathfrak k}_c)$.

We also prove the following (see Proposition \ref{prop4}):

\begin{proposition}\label{prop4i}
Take $E_G$ and $E'_C$ as in Theorem \ref{thmi0}.
Assume that the principal $C$--bundle $E'_C$ over $E_W$ is polystable. Let $\nabla$
be the Hermitian--Einstein connection on $E'_C$. Assume that the element of $z({\mathfrak k}_c)$
given by the curvature of $\nabla$ lies in
the subspace $z({\mathfrak k})$. Then the following two hold:
\begin{enumerate}

\item The principal $G$--bundle $E_G$ on $M$ is polystable.

\item The Hermitian--Einstein connection on $E_G$ is given by $\nabla$.
\end{enumerate}
\end{proposition}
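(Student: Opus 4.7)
The plan is to pull $E_G$ back to $E_W$ via $\phi$, extend $\nabla$ to a complex connection on $\phi^*E_G$, verify the Hermitian--Einstein equation there, and then descend the result to a connection on $E_G$ over $M$. Polystability of $E_G$ will then follow from the existence direction of the Hermitian--Einstein correspondence for principal bundles cited in the introduction.

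First I would extend $\nabla$ from $E'_C$ to a complex connection $\widetilde{\nabla}$ on $\phi^*E_G$ over $E_W$ via the reduction $E'_C\subset \phi^*E_G$ and the inclusion $C\hookrightarrow G$. Its curvature is $F(\nabla)$ regarded in $\mathrm{ad}(\phi^*E_G)$ via ${\mathfrak c}\hookrightarrow {\mathfrak g}$, so $\Lambda_{\phi^*\omega}F(\widetilde{\nabla})$ equals $\Lambda_{\phi^*\omega}F(\nabla)$; by hypothesis this lies in $z({\mathfrak k})\subset z({\mathfrak k}_c)$.

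The main step is to descend $\widetilde{\nabla}$ to a complex connection $\nabla_G$ on $E_G$. Since $\phi:E_W\to M$ is an étale principal $W$-bundle, this reduces to showing that $\widetilde{\nabla}$ is invariant under the deck action of $W$ on $\phi^*E_G=E_W\times_M E_G$. On the reduction $E'_C\subset \phi^*E_G$ the deck $W$-action and the $N$-action $\tau$ are related by $w\cdot e=\tau_n(e)\cdot n^{-1}$ for $n\in N$ projecting to $w\in W$, so the required $W$-invariance of $\widetilde{\nabla}$ translates into $\tau$-equivariance of the connection form of $\nabla$. To establish this, I would average the Hermitian structure on $E'_C$ defining $\nabla$ over the $N$-action $\tau$ to produce a $\tau$-invariant Hermitian structure; its Chern connection is still Hermitian--Einstein with curvature satisfying the hypothesis, and uniqueness of the Chern connection forces the averaged connection to be $\tau$-equivariant. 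I expect this descent step to be the main obstacle, since tracking the interaction of $\tau$ with the $C$-reduction and with the $G$-bundle structure on $\phi^*E_G$ is delicate.

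Once $\nabla_G$ exists, its curvature satisfies $\Lambda_\omega F(\nabla_G)\in z({\mathfrak k})$, so $\nabla_G$ is Hermitian--Einstein on $E_G$ for the $K_G$-reduction obtained by descending the averaged $K_C$-reduction and extending via $K_C\hookrightarrow K_G$. Polystability of $E_G$ then follows from the references \cite{UY}, \cite{Do1}, \cite{RS}, \cite{AB} cited in the introduction; and by construction the restriction of $\nabla_G$ along the reduction $E_N=E'_C$ is exactly $\nabla$, yielding both assertions of the proposition.
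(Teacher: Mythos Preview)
Your overall strategy---extend $\nabla$ to a connection $\widetilde{\nabla}$ on $\phi^*E_G$, check the Hermitian--Einstein equation there, descend by $W$-invariance, and conclude polystability from the Kobayashi--Hitchin correspondence---is exactly the route the paper takes. The difference lies entirely in how you establish the $W$-invariance needed for descent, and that is where your argument has a gap.

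You propose to average the Hermitian structure on $E'_C$ over the $N$-action $\tau$. But $N$ is a non-compact complex group, so there is no invariant measure to average against; the averaging step as written does not make sense. Even if you reinterpret this as averaging over a finite set of coset representatives for $W=N/C$ (or over the compact group $K_G\cap N$), an average of Hermitian reductions has no reason to remain Hermitian--Einstein: the Chern connection depends nonlinearly on the metric, and the Einstein condition is not convex in any useful sense. So the sentence ``its Chern connection is still Hermitian--Einstein'' is an unjustified claim, and the subsequent appeal to ``uniqueness of the Chern connection'' does not close the circle.

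The paper bypasses all of this with a one-line observation you already have the ingredients for. Once you know $\widetilde{\nabla}$ is a Hermitian--Einstein connection on $\phi^*E_G$, note that each $w\in W=\mathrm{Gal}(\phi)$ acts on $\phi^*E_G$ as a holomorphic bundle automorphism covering an isometry of $(E_W,\phi^*\omega)$; hence $w^*\widetilde{\nabla}$ is again a Hermitian--Einstein connection on $\phi^*E_G$. By \emph{uniqueness of the Hermitian--Einstein connection} (not of the Chern connection) on a given holomorphic bundle, $w^*\widetilde{\nabla}=\widetilde{\nabla}$ for every $w$, so $\widetilde{\nabla}$ is $W$-invariant and descends. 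No averaging, no analysis of how $\tau$ interacts with the $C$-reduction, is needed. Replace your middle paragraph with this argument and the proof goes through.
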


As indicated in Section \ref{seeq}, all the above results extend to the equivariant
set-up. This means that the results of \cite{BM} extend to the context of complex
reductive affine algebraic groups.

\section{Torus subbundle and Levi reduction of structure group of a principal bundle}

\subsection{A Levi reduction from a torus subbundle}\label{se2.1}

Let $G$ be a connected complex reductive affine algebraic group. A parabolic subgroup of $G$ is 
a Zariski closed connected subgroup $P\, \subset\, G$ such that the quotient variety $G/P$ is 
projective. The unipotent radical of a parabolic subgroup $P\, \subset\, G$ is denoted by 
$R_u(P)$. The quotient $P/R_u(P)$ is a reductive affine complex algebraic group. A connected reductive 
complex algebraic subgroup $L(P)\, \subset\, P$ is called a Levi factor of $P$ if the composition
of maps
$$
L(P)\, \hookrightarrow\, P\, \longrightarrow\, P/R_u(P)
$$
is an isomorphism \cite[p. 158, \S~11.22]{Bo}. There are Levi factors of $P$; any two Levi factors 
of $P$ differ by the inner automorphism of $P$ produced by an element of $R_u(P)$ \cite[p. 158, \S~11.23]{Bo}, 
\cite[\S~30.2, p. 184]{Hu}.

Fix a Borel subgroup $B_G\, \subset\, G$, and also fix a maximal torus $T_G\, \subset\, B_G$. 
Given any parabolic subgroup $P\,\subset\, G$, there is some element $g_0\, \in\, G$ such that 
we have $B_G\,\subset\, g^{-1}_0 Pg_0$ \cite[p.~134, Theorem 21.3]{Hu}. Henceforth, whenever we 
consider a parabolic subgroup of $G$, we would assume that $P\, \supset\, B_G$. The connected 
component of the center of $G$ containing the identity element will be denoted by $Z_0(G)$; this 
$Z_0(G)$ is isomorphic to a product of copies of the multiplicative group ${\mathbb C}^*\, :=\, 
{\mathbb C}\setminus\{0\}$.

Let $M$ be a compact connected complex manifold. Let
\begin{equation}\label{p0}
p_0\, :\, E_G\, \longrightarrow\, M
\end{equation}
be a holomorphic principal $G$--bundle over $M$; this means that $E_G$ is a holomorphic fiber bundle over $M$
equipped with a holomorphic right-action of $G$
$$
q_0\, :\, E_G\times G\,\longrightarrow\, E_G
$$
such that the above projection $p_0$ is $G$--invariant and, furthermore, the resulting map to the fiber product
$$
E_G\times G \, \longrightarrow\, E_G\times_M E_G\, , \ \ (y,\, z) \, \longrightarrow\, (y,\,
q_0(y,\, z))
$$
is a biholomorphism. For notational convenience, for any $(y,\, z)\,\in\, E_G\times G$, the point
$q_0(y,\, z)\, \in\, E_G$ will be denoted by $yz$. Given a holomorphic principal $G$--bundle
$E_G$ as above, consider the quotient of $E_G\times G$, where two points $(y,\, z)$ and $(y_1,\,
z_1)$ of $E_G\times G$ are identified if there is some $g_0\, \in\, G$ such that $y_1\,=\,
yg_0$ and $z_1\,=\, g^{-1}_0zg_0$. Let
\begin{equation}\label{e0}
f_0\, :\, E_G\times G\, \longrightarrow\, \text{Ad}(E_G)\,:=\, (E_G\times G)/\sim 
\end{equation}
be this quotient. Each fiber of the projection
$$
p\, :\, \text{Ad}(E_G)\, \longrightarrow\, M\, ,\ \ (y,\, z)\, \longmapsto\, p_0(y)\, ,\ \
(y,\, z)\,\in\, E_G\times G
$$
is a group isomorphic to $G$, where the group operation is given by
$(y,\, z)\cdot (y,\, z')\,=\, (y,\, zz')$ (it is straightforward to check that the
group operation is well-defined, meaning it is independent of $y$); note that the map
$(y,\, z)\, \longmapsto\, z$ is an isomorphism of $\text{Ad}(E_G)_x\,:=\, p^{-1}(x)$ with $G$.
For any $x\, \in\, M$, the above
isomorphism between $\text{Ad}(E_G)_x$ and $G$ depends on the choice of
the point $y\, \in \, p^{-1}_0(x)$. However, for two different choices of $y$, the corresponding
isomorphisms differ by an inner automorphism of $G$. In other words, $\text{Ad}(E_G)_x$ and
$G$ are identified uniquely up to an inner automorphism. This $\text{Ad}(E_G)$ is called the
adjoint bundle for $E_G$.

The adjoint action of any $z\, \in\, G$ on $G$ fixes the subgroup $Z_0(G)$ pointwise. Therefore,
$M\times Z_0(G)\, \longrightarrow\, M$ is a subbundle of $\text{Ad}(E_G)$.

Let
\begin{equation}\label{e2}
{\mathbb T}\, \subset\, \text{Ad}(E_G)\, \stackrel{p}{\longrightarrow}\, M
\end{equation}
be a holomorphic sub-fiber bundle such that
\begin{itemize}
\item $M\times Z_0(G)\, \subset\, {\mathbb T}$, and

\item for every point $x\, \in\, M$, the
fiber
$$
{\mathbb T}_x\,:=\, {\mathbb T}\cap \text{Ad}(E_G)_x\,\subset\, \text{Ad}(E_G)_x
$$
is a torus (it need not be a maximal torus of $\text{Ad}(E_G)_x$).
\end{itemize}

Take any point $x\, \in\,
M$. Since $\text{Ad}(E_G)_x$ is identified with $G$ up to an inner automorphism, the torus
${\mathbb T}_x\,\subset\, \text{Ad}(E_G)_x$ determines a conjugacy class of tori in
$G$. From the rigidity of tori in $G$ it follows that this conjugacy class of tori
in $G$ is independent of the choice of the point $x\, \in\, M$. Note that any torus in $G$
is conjugate to a sub-torus of $T_G$, and the space of sub-tori in $T_G$ is a discrete (countable)
set. Fix a torus
\begin{equation}\label{e3}
T\, \subset\, T_G\, \subset\, G
\end{equation}
in the conjugacy class determined by $\mathbb T$. Note that we have $$Z_0(G)\, \subset\, T\, ,$$
because $M\times Z_0(G)\, \subset\, {\mathbb T}$.

Let
\begin{equation}\label{e4}
N \, :=\, N_G(T) \subset\, G
\end{equation}
be the normalizer in $G$ of the subgroup $T$ in \eqref{e3}. The connected component of $N$
\begin{equation}\label{e5}
C\, :=\, N_0\, \subset\, N
\end{equation}
containing the identity element is in fact the centralizer of $T$ in $G$. This component $C$ of $N$
is a Levi factor of a parabolic subgroup of $G$ \cite[\S~3]{SpSt} (see also
\cite[p.~110, Theorem 6.4.7(i)]{Sp}). 
The quotient $N/C$ is the Weyl group associated to $T\, \subset\, G$; it
is a finite group. The normalizer of $C$ in $G$ actually coincides with $N$.

Now let
\begin{equation}\label{en}
E_N\, \subset\, E_G
\end{equation}
be the unique largest subset such that
\begin{equation}\label{e6}
E_N\times T\, \subset \, f^{-1}_0({\mathbb T}) \, \subset\, E_G\, ,
\end{equation}
where $f_0$, $\mathbb T$ and $T$ are as in \eqref{e0}, \eqref{e2} and \eqref{e3} respectively. Let
\begin{equation}\label{p02}
p_0\, :\, E_N\, \longrightarrow\, M
\end{equation}
be the restriction of the projection in \eqref{p0}; this repetition of notation should not
cause any confusion. Clearly, $E_N$ is a complex manifold, and the projection $p_0$
in \eqref{p02} is holomorphic because the projection in \eqref{p0} is holomorphic.

We will prove that $E_N$ is a holomorphic principal $N$--bundle over $M$, where $N$ is
constructed in \eqref{e4}. For this first note that for any $y\, \in\, E_N$ and $g_0\, \in\, G$,
we have
\begin{equation}\label{enc}
f_0(yg_0,\, t)\, =\, f_0(y,\, g_0tg^{-1}_0)
\end{equation}
(see the construction of $f_0$ in \eqref{e0}). Using \eqref{enc} we will show that $yg_0\, \in\, E_N$
if and only if $g_0\, \in\, N$, which would imply that $E_N$ is a holomorphic principal
$N$--bundle over $M$. To see that $yg_0\, \in\, E_N$ for all $g_0\, \in\, N$,
first note that if $g_0\, \in\, N$, then
$g_0tg^{-1}_0\, \in\, T$ whenever $t\, \in\, T$. Therefore, in view of
\eqref{enc}, the given condition that
$\{y\}\times T\, \subset\, f^{-1}_0({\mathbb T})$ immediately implies that
$\{yg_0\}\times T\, \subset\, f^{-1}_0({\mathbb T})$ if $g_0\, \in\, N$. So we have
$yg_0\, \in\, E_N$ for all $g_0\, \in\, N$.

To prove the converse, note that
the given condition that $\{y\}\times T\, \subset\, f^{-1}_0({\mathbb T})$ implies that
$f_0(\{y\}\times T)\,=\, {\mathbb T}_{p_0(y)}$, because $T$ is identified with 
${\mathbb T}_{p_0(y)}$ up to an inner automorphism. Therefore, if $yg_0\, \in\, E_N$,
then from \eqref{enc} it follows that $g_0tg^{-1}_0\, \in\, T$ for all $t\, \in\, T$. This
implies that $g_0\, \in\, N$ if $yg_0\, \in\, E_N$.

{}From \eqref{en} we conclude that the principal $N$--bundle
$E_N$ is a holomorphic reduction of structure group
of $E_G$ to the subgroup $N$ in \eqref{e4}. Equivalently, $E_G$ coincides with the holomorphic
principal $G$--bundle on $M$ obtained by extending the structure group of the holomorphic
principal $N$--bundle $E_N$ using the inclusion of $N$ in $G$.

For notational convenience, the Weyl group $N/C$ in \eqref{e5} will be denoted by $W$. Let
\begin{equation}\label{e7}
E_W\, :=\, E_N/C \,=\, E_N\times^N W\, \stackrel{\phi}{\longrightarrow}\, M
\end{equation}
be the principal $W$--bundle over $M$ obtained by extending the structure group of
the principal $N$--bundle $E_N$ using the
quotient map $N\, \longrightarrow\, N/C \,=\, W$. So $\phi$ in \eqref{e7} is an \'etale
Galois covering with Galois group $\text{Gal}(\phi)\,=\, W$. This $E_W$ need not be connected.

\begin{lemma}\label{lem1}
The pulled back principal $N$--bundle $\phi^*E_N\, \longrightarrow\, E_W$ has a tautological
reduction of structure group to the subgroup $C\, \subset\, N$ in \eqref{e5}, where $\phi$
is the projection in \eqref{e7}.
\end{lemma}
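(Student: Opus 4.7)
The plan is to exhibit the reduction explicitly by using the quotient map $q\colon E_N\,\longrightarrow\, E_N/C\,=\, E_W$, which is itself a principal $C$--bundle (since $C\,\subset\, N$ and $E_N$ is a principal $N$--bundle, the quotient $E_N/C$ exists and the map $E_N\,\longrightarrow\, E_N/C$ is a principal $C$--bundle). The fundamental observation is that this presents $E_N$ in two guises at once: as a principal $N$--bundle over $M$, and as a principal $C$--bundle over $E_W$.

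First I would identify the pullback $\phi^*E_N$ with the fiber product $E_W\times_M E_N$, with the structure group $N$ acting on the second factor. Then I would define the candidate reduction as the subset
$$
E'_C \,:=\, \{(w,\,y)\,\in\, E_W\times_M E_N\,\mid\, q(y)\,=\, w\}\, ,
$$
which is the image of the holomorphic embedding $E_N\,\longrightarrow\, \phi^*E_N$, $y\,\longmapsto\, (q(y),\, y)$. Projection to the first factor then coincides with $q\colon E_N\,\longrightarrow\, E_W$, so $E'_C$ is already known to be a principal $C$--bundle over $E_W$.

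Next I would verify that $E'_C$ is preserved by the restricted action of $C$ and only by $C$: for $(w,y)\,\in\, E'_C$ and $n\,\in\, N$, the translate $(w,\, yn)$ lies in $E'_C$ if and only if $q(yn)\,=\, w\,=\, q(y)$, i.e., if and only if $n\,\in\, C$ (since $C$ is precisely the stabilizer of the $C$--orbit $q(y)$ inside $N$). This gives $E'_C$ as an $N$--invariant $C$--subbundle, and the $N$--equivariant map $E'_C\times^C N\,\longrightarrow\, \phi^*E_N$, $[(w,y),\, n]\,\longmapsto\, (w,\, yn)$, is a biholomorphism because every point of $\phi^*E_N$ is of the form $(q(y),\, y)\cdot n$ for a unique $n\,\in\, N$ (once $y$ is fixed in $q^{-1}(w)$). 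This is precisely what it means for $E'_C$ to be a holomorphic reduction of structure group of $\phi^*E_N$ to $C$.

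There is no genuine obstacle here; the content of the lemma is the simple remark that the graph of the quotient map $q\colon E_N\,\longrightarrow\, E_W$ inside $E_W\times_M E_N$ is the tautological $C$--reduction. The only thing to monitor is distinguishing the two roles of $E_N$ (as $N$--bundle on $M$ versus $C$--bundle on $E_W$); once that distinction is set out, the diagonal-type embedding makes the reduction canonical and holomorphic.
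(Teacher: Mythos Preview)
Your proposal is correct and takes essentially the same approach as the paper: both observe that the quotient map $E_N\longrightarrow E_N/C=E_W$ already makes $E_N$ a holomorphic principal $C$--bundle over $E_W$, and identify this (via the identity map, or equivalently via the graph embedding $y\mapsto (q(y),y)$ into $\phi^*E_N=E_W\times_M E_N$) as the tautological reduction. Your write-up is more explicit about the fiber-product description and the verification that the $N$--translates of $E'_C$ fill out $\phi^*E_N$, but the underlying idea is identical; one small wording slip to fix is the phrase ``$N$--invariant $C$--subbundle'', since you have just shown $E'_C$ is preserved precisely by $C$, not by all of $N$.
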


\begin{proof}
Consider the quotient map $E_N\, \longrightarrow\, E_N/C \,=\, E_W$. This makes $E_N$
a holomorphic principal $C$--bundle over $E_W$. We will denote by $E'_C$
this holomorphic principal $C$--bundle over $E_W$. The identity map $E_N\,=\, E'_C
\longrightarrow\, E_N$ is $C$--equivariant. For any $z\, \in\, E_W$, the $C$--equivariant
map $(E'_C)_z \, \longrightarrow\, (E_N)_{\phi(z)}$, obtained by restricting the above identity
map of $E_N$, is evidently an embedding. Therefore, $E'_C$ is a holomorphic reduction of
structure group of the principal $N$--bundle $\phi^*E_N$ to the subgroup $C\, \subset\, N$.
\end{proof}

Since $E_N$ is a holomorphic reduction of structure group of $E_G$ to $N$, Lemma
\ref{lem1} implies that $E'_C$ is also a holomorphic
reduction of structure group of $\phi^* E_G$ to the subgroup $C\, \subset\, G$.

Let
\begin{equation}\label{psi}
\psi\, :\, E'_C\,=\, E_N\, \longrightarrow\, E_W\, :=\, E_N/C
\end{equation}
be the quotient map.

We note that the group $N$ acts holomorphically on $E'_C$, because $E'_C\,=\, E_N$; for
this action $\tau\, :\, E'_C\times N \, \longrightarrow \, E'_C$
of $N$ on $E'_C$, the following diagram of maps is evidently commutative:
\begin{equation}\label{e8}
\begin{matrix}
E'_C\times N & \stackrel{\tau}{\longrightarrow} & E'_C\\
\Big\downarrow && ~\Big\downarrow\psi \\
E_W\times W := (E'_C/C)\times (N/C)& \longrightarrow & E_W
\end{matrix}
\end{equation}
where $E'_C\times N\, \longrightarrow\, (E'_C/C)\times (N/C)\,=\, (E'_C\times N)/(C\times C)$
is the natural quotient map, and $E_W\times W\, \longrightarrow\, E_W$ is the action of $W$
on the principal $W$--bundle $E_W$, while $\psi$ is the map in \eqref{psi}.

\subsection{A torus subbundle from a Levi reduction}\label{se2.2}

We will now describe a reverse of the construction done in Section \ref{se2.1}.

As before, let $E_G$ be a holomorphic principal $G$--bundle over $X$.
Take a torus $$T\, \subset\, T_G\,\subset\, B_G$$
such that $Z_0(G)\, \subset\, T$. As before, the normalizer (respectively,
centralizer) of $T$ in $G$ will be denoted by $N$ (respectively, $C$). Denote the Weyl
group $N/C$ by $W$.

Let $\phi\, :\, E_W\, \longrightarrow\, W$ be a principal $W$--bundle. Assume that the
holomorphic principal $G$--bundle $\phi^*E_G\, \longrightarrow\, E_W$ has a holomorphic
reduction of structure group to the subgroup $C\, \subset\, G$
\begin{equation}\label{f1}
E'_C\, \subset\, \phi^*E_G
\end{equation}
such that
\begin{enumerate}
\item the principal $C$--bundle $E'_C$ is equipped with a holomorphic action
of the complex group $N$
$$
\tau\, :\, E'_C\times N\, \longrightarrow\, E'_C
$$
that extends the natural action of the subgroup $C$ on the principal $C$--bundle $E'_C$, and

\item the diagram of maps
\begin{equation}\label{e9}
\begin{matrix}
E'_C\times N & \stackrel{\tau}{\longrightarrow} & E'_C\\
\Big\downarrow && ~\Big\downarrow\psi \\
E_W\times W := (E'_C/C)\times (N/C)& \longrightarrow & E_W
\end{matrix}
\end{equation}
is commutative, where $E'_C\times N\, \longrightarrow\, (E'_C/C)\times (N/C)\,=\,
(E'_C\times N)/(C\times C)$ is the natural quotient map, and $\psi$ is the
projection of $E'_C$ to $E_W$, so $\psi$ coincides with
the restriction to $E'_C$ of the pullback $\phi^*p_0$,
where $p_0$ is the projection in \eqref{p0}; note that \eqref{e9} is similar to \eqref{e8}.
\end{enumerate}

\begin{lemma}\label{lem2}
The action of $W$ on the principal $W$--bundle $E_W$ has a canonical lift to an action of
$W$ on the adjoint bundle ${\rm Ad}(E'_C)\,=\, E'_C\times^C C\,\longrightarrow \, E_W$
for $E'_C$. This action of $W$ on ${\rm Ad}(E'_C)$ preserves the group structure of the fibers of ${\rm Ad}(E'_C)$.
\end{lemma}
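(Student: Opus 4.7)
The plan is to define the $W$-action on $\mathrm{Ad}(E'_C)$ explicitly on representatives by combining the given action $\tau$ of $N$ on $E'_C$ with the conjugation action of $N$ on $C$, and then verify that this formula descends to a well-defined action with the stated properties. The key preliminary observation is that $C$ is normal in $N$: since $C$ is the centralizer in $G$ of the torus $T$ and $N$ is its normalizer, conjugation by any $n \in N$ preserves $T$ and hence preserves its centralizer $C$, so $n^{-1} c n \in C$ for all $n \in N$, $c \in C$.

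For $n \in N$, let $w = nC \in W$ denote its class. I would set
$$\hat{\tau}_n([y, c]) \, :=\, [\tau(y, n),\, n^{-1} c n]\, ,$$
where $[y, c]$ denotes the class of $(y, c) \in E'_C \times C$ under the $C$-action $(y, c) \cdot c_0 = (y c_0,\, c_0^{-1} c c_0)$ defining $\mathrm{Ad}(E'_C) = E'_C \times^C C$. Four verifications are then required. First, $\hat{\tau}_n$ is independent of the representative $(y, c)$: using that $\tau$ is an $N$-action and writing $c_0 n = n(n^{-1} c_0 n)$ with $c_1 := n^{-1} c_0 n \in C$, one has $\tau(y c_0, n) = \tau(y, c_0 n) = \tau(y, n) \cdot c_1$, and then the pair $(\tau(y, n) c_1,\, c_1^{-1} (n^{-1} c n) c_1)$ represents the same class as $(\tau(y, n),\, n^{-1} c n)$. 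Second, $\hat{\tau}_n$ depends only on $w = nC$: replacing $n$ by $n c_0$ with $c_0 \in C$ changes $\tau(y, n)$ to $\tau(y, n) \cdot c_0$ and $n^{-1} c n$ to $c_0^{-1}(n^{-1} c n) c_0$, and these changes are absorbed by the $C$-equivalence. Third, the assignment $w \mapsto \hat{\tau}_w$ is a group action, which follows from $\tau$ being a group action and conjugation $c \mapsto n^{-1} c n$ being a group homomorphism in $n$.

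Finally, the fact that $\hat{\tau}_w$ covers the $W$-action on $E_W$ along the bundle projection $\mathrm{Ad}(E'_C) \to E_W$ is a direct consequence of the commutative diagram \eqref{e9}, and the fact that $\hat{\tau}_w$ preserves the fiberwise group structure of $\mathrm{Ad}(E'_C)$ is immediate from the definition, since $[y, c] \cdot [y, c'] = [y, cc']$ on a fiber, and $n^{-1}(cc')n = (n^{-1} c n)(n^{-1} c' n)$. The main obstacle is the bookkeeping in the two well-definedness checks, where one must simultaneously track the $C$-equivalence defining $\mathrm{Ad}(E'_C)$ and the $C$-ambiguity in lifting $w \in W$ to $n \in N$; both reduce, however, to the single identity $\tau(y c_0, n) = \tau(\tau(y, n),\, n^{-1} c_0 n)$, which encodes precisely the compatibility between $\tau$ and the normality of $C$ in $N$.
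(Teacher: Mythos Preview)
Your proposal is correct and follows essentially the same approach as the paper: both define the $N$-action on $E'_C \times C$ by $(y,c) \mapsto (\tau(y,n),\, n^{-1}cn)$, check that it descends to $\mathrm{Ad}(E'_C)$ and factors through $W = N/C$, and note that conjugation preserves the group law on fibers. Your write-up is in fact slightly more explicit than the paper's (you separate the two well-definedness checks and record that the action covers the $W$-action on $E_W$ via diagram \eqref{e9}), and your justification for $C \trianglelefteq N$ via the centralizer/normalizer relation is equivalent to the paper's observation that $C$ is the identity component of $N$.
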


\begin{proof}
The adjoint action of $N$ on itself preserves $C$ because it is the connected component of
$N$ containing the identity element.
We recall that $\text{Ad}(E'_C)$ is the quotient of $E'_C\times C$ where two elements $(y,\, z),\,
(y_1,\, z_1)\, \in\, E'_C\times C$ are identified if there is an element $c\, \in\, C$ such that
$y_1\,=\, yc$ and $z_1\,=\, c^{-1}zc$. 

Consider the following action of $N$ on $E'_C\times C$: the action of any $n\, \in\, N$ sends any
$(y,\, z)\, \in\, E'_C\times C$ to $(\tau(y,\, n),\, n^{-1}zn)$, where $\tau$ is the action
of $N$ in \eqref{e9} that extends the action of $C$ on $E'_C$. So for any $c\, \in\, C$,
the action of $n$ sends $(yc,\, c^{-1}zc)$ to $(\tau(yc,\, n),\, n^{-1}c^{-1}zcn)\,=\,
(\tau(y,\, cn),\, (cn)^{-1}zcn)$ (recall that the restriction of the action $\tau$ to
$C\, \subset\, N$ coincides with the natural action of $C$ on $E'_C$).
The image of $(yc,\, c^{-1}zc)$ (respectively,
$(\tau(y,\, cn),\, (cn)^{-1}zcn)$) in the quotient space $\text{Ad}(E'_C)$ of $E'_C\times C$
coincides with the image of $(y,\, z)$ (respectively, $(yc,\, c^{-1}zc)$).
Therefore, the above action of $N$ on $E'_C\times C$ produces an action of $N$ on the
quotient manifold $\text{Ad}(E'_C)$. Next note that if $n\, \in\, C$, then
the image of $(\tau(y,\, n),\, n^{-1}zn)$ in $\text{Ad}(E'_C)$ coincides with the image of
$(y,\, z)$ in $\text{Ad}(E'_C)$. Consequently, the above action of $N$ on
$\text{Ad}(E'_C)$ produces an action of $W\,=\, N/C$ on $\text{Ad}(E'_C)$.

The above action of $W$ on
$\text{Ad}(E'_C)$ preserves the group structure of the fibers of ${\rm Ad}(E'_C)$, because
the adjoint action of a group on itself preserves the group structure.
\end{proof}

Consider the action $\tau$ of $N$ on $E'_C$ in \eqref{e9}. From \eqref{e9} it follows that this 
action and the composition of maps $E'_C\, \stackrel{\psi}{\longrightarrow}\, E_W \, 
\stackrel{\phi}\longrightarrow\, M$ together define a holomorphic principal $N$--bundle over 
$M$. This holomorphic principal $N$--bundle over $M$ will be denoted by $E_N$.

\begin{lemma}\label{lem3}
The holomorphic principal $G$--bundle over $M$, obtained by extending the structure group
of the above defined principal $N$--bundle $E_N$ using the inclusion of $N$ in $G$, is
identified with $E_G$.
\end{lemma}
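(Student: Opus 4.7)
The plan is to exhibit a canonical isomorphism of principal $G$--bundles $\overline{\mu}\,:\, E_N\times^N G\,\longrightarrow\, E_G$ over $M$. As a candidate, I would take the map $\mu\,:\, E_N\times G \,\longrightarrow\, E_G$ defined by $\mu(e,\,g)\,=\,\iota(e)\cdot g$, where $\iota\,:\, E_N\,=\, E'_C\,\hookrightarrow\, \phi^*E_G\,\longrightarrow\, E_G$ is the composition of the inclusion in \eqref{f1} with the projection of $\phi^*E_G\,=\, E_W\times_M E_G$ onto the second factor. By construction $\mu$ is holomorphic, is equivariant for the right $G$--action (on the second factor of $E_N\times G$ and on $E_G$), and covers the map $\phi\circ\psi\,:\, E_N\,\to\, M$.

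The key technical step is to verify that $\mu$ descends to the quotient $E_N\times^N G\,=\,(E_N\times G)/N$, on which $N$ acts as $(e,\,g)\cdot n\,=\,(\tau(e,\,n),\, n^{-1}g)$; equivalently, I would check the $N$--equivariance identity $\iota(\tau(e,\,n))\,=\,\iota(e)\cdot n$ for all $n\,\in\, N$ and $e\,\in\, E'_C$. For $n\,\in\, C$ this is immediate, since $\iota$ is $C$--equivariant (because $E'_C$ is a $C$--reduction of $\phi^*E_G$ and the projection $\phi^*E_G\to E_G$ is $G$--equivariant). For general $n\,\in\, N$, I would compare the two holomorphic maps $\iota\circ\tau(\cdot,\,n)$ and $(\,\cdot\, n)\circ\iota$ from $E'_C$ to $E_G$: both cover the same base map $\phi\circ\psi\,:\,E'_C\to M$ (the former by the commutativity of \eqref{e9}, the latter trivially), and, using that $C$ is normal in $N$, both satisfy the identical twisted $C$--equivariance with twist $\mathrm{Ad}(n^{-1})\,:\, C\to C$, namely the rule $e\cdot c \,\longmapsto\, (\,\cdot\,)\cdot (n^{-1}cn)$. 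Because such twisted $C$--equivariant maps from the principal $C$--bundle $E'_C\to E_W$ are determined by their value at a single point in each $\psi$--fiber, the agreement of the two maps reduces to a pointwise check on a section, which is forced by the inclusion $E'_C\,\subset\, \phi^*E_G$ together with \eqref{e9}.

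Once the descent is established, the resulting holomorphic $G$--equivariant map $\overline{\mu}\,:\, E_N\times^N G\,\longrightarrow\, E_G$ over $M$ is automatically an isomorphism of principal $G$--bundles, yielding the desired identification. The principal obstacle is the $N$--equivariance of $\iota$ for elements $n\,\notin\, C$: this requires carefully exploiting the interaction between the action $\tau$, the $C$--reduction $E'_C$ inside $\phi^*E_G$, and the commutative diagram \eqref{e9}, which is the essential new content of the construction in Section \ref{se2.2}.
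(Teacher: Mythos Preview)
Your approach is the same as the paper's: construct the map $\iota\colon E_N=E'_C\hookrightarrow\phi^*E_G\to E_G$, show it is $N$--equivariant (so that $E_N$ sits inside $E_G$ as an $N$--reduction), and conclude. The paper simply declares this equivariance to be ``clear'' and moves on; you correctly isolate it as the crux, and your treatment of the case $n\in C$ together with the twisted $C$--equivariance of the two candidate maps $\iota\circ\tau(\cdot,n)$ and $(\,\cdot\,)n\circ\iota$ is accurate.

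The gap is in your last step. Knowing that two maps $E'_C\to E_G$ share the base map $\phi\circ\psi$ and share the twisted $C$--equivariance $e c\mapsto(\cdot)(n^{-1}cn)$ only says they differ, on each $\psi$--fiber, by right translation by some element of $G$; it does not force them to coincide. Your assertion that agreement is ``forced by the inclusion $E'_C\subset\phi^*E_G$ together with \eqref{e9}'' is not substantiated: diagram \eqref{e9} constrains only the $E_W$--component $\psi(\tau(e,n))=\psi(e)\bar n$, while the inclusion $E'_C\subset\phi^*E_G$ only tells you that $\tau(e,n)$ lies in the $C$--orbit $(E'_C)_{\psi(e)\bar n}$, not which point of that orbit it is. Concretely, pick $t_0\in T$ not central in $N$ and set $\tau'(e,n):=\tau\bigl(e,\,t_0 n t_0^{-1}\bigr)$. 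Since $T\subset Z(C)$, this $\tau'$ again restricts to the given $C$--action, and since conjugation by $t_0$ is trivial on $W=N/C$, diagram \eqref{e9} still commutes; so $\tau'$ satisfies all the stated hypotheses. But $\iota$ cannot be $N$--equivariant for both $\tau$ and $\tau'$, so your descent argument for $\overline\mu$ fails for at least one of them. (The principal $N$--bundles $(E'_C,\tau)$ and $(E'_C,\tau')$ are isomorphic over $M$ via right translation by $t_0$, so the conclusion of the lemma survives; it is the specific identification through $\iota$ that need not.)
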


\begin{proof}
Let $E_N\, \longrightarrow\, E_G$ be the composition of the natural map $\phi^*E_G\,\longrightarrow\,
E_G$ with the inclusion $E_N\,=\, E'_C\, \hookrightarrow\, \phi^*E_G$ in \eqref{f1}. This map
$E_N\, \longrightarrow\, E_G$ is clearly $N$--equivariant. This implies that $E_N$ is a reduction
of structure group of the principal $G$--bundle $E_G$ to the subgroup $N$. In other words, $E_G$ is
identified with the holomorphic principal $G$--bundle over $M$ obtained by extending the structure
group of the principal $N$--bundle $E_N$ using the inclusion of $N$ in $G$.
\end{proof}

Let
\begin{equation}\label{tp}
\text{Ad}(E'_C)\, \supset\, {\mathbb T}' \,\longrightarrow \, E_W
\end{equation}
be the bundle of connected components of centers
containing identity element; so for any $y\, \in\, E_W$, the fiber $({\mathbb T}')_y$ of
$\mathbb T$ is the connected component of the center of the group $\text{Ad}(E'_C)_y$ containing the
identity element. Note that ${\mathbb T}'$ is identified with the trivial bundle $E_W\times T
\,\longrightarrow \, E_W$, because $T$ is the connected component, containing the
identity element, of the center of $C$; the identification between $E_W\times T$ and
${\mathbb T}'$ sends any $(y,\, z)\, \in\, E_W\times T$ to the equivalence class of
$(y',\, z)$ in the quotient $\text{Ad}(E'_C)$ of $E'_C\times C$, where $y'$ is any element of
the fiber $(E'_C)_y$ (this equivalence class does not depend on the choice of the point
$y'$ in $(E'_C)_y$). The action of $W$ on $\text{Ad}(E'_C)$ in Lemma \ref{lem2}
preserves ${\mathbb T}'$ because the action of $W$ preserves the group structure of the fibers of 
${\rm Ad}(E'_C)$ (hence the centers of fibers are preserved implying that their
connected components containing identity element are also preserved). Consequently, we have a
torus subbundle
\begin{equation}\label{e10}
{\mathbb T}\, :=\, {\mathbb T}'/W\, \subset\, {\rm Ad}(E'_C)/W \, \subset\,
{\rm Ad}(\phi^* E_G)/W\,=\, \text{Ad}(E_G)\, .
\end{equation}

The construction of ${\mathbb T}$ in \eqref{e10} from $E'_C$ in \eqref{f1} is the
inverse of the construction of $E'_C$ in Lemma \ref{lem1} from ${\mathbb T}$
in \eqref{e2}. More precisely, starting with ${\mathbb T}$ in \eqref{e2}, construct
$E'_C$ as in Lemma \ref{lem1}. Now set $E'_C$ in \eqref{f1} to be this
holomorphic principal $C$--bundle constructed from ${\mathbb T}$ in \eqref{e2}. Then
the torus bundle ${\mathbb T}$ constructed in \eqref{e10} coincides with the torus
bundle in \eqref{e2} that we started with.

Conversely, start with $E'_C$ in \eqref{f1} and construct $\mathbb T$ as in \eqref{e10}.
Setting this to be the torus bundle in \eqref{e2}, the principal $C$--bundle
constructed in Lemma \ref{lem1} coincides with $E'_C$ in \eqref{f1} that we started with.

\begin{remark}\label{rem1}
Although the principal $C$--bundle $E'_C$ in Lemma \ref{lem2} does not, in general,
descend to $M$, note that the adjoint bundle $\text{Ad}(E'_C)$, being $W$ equivariant
(see Lemma \ref{lem2}), descends to $M$ as
a subbundle of the adjoint bundle $\text{Ad}(E_N)$ of $E_N$ in Lemma \ref{lem3}. For every
point $x\, \in\, M$, the fiber of $\text{Ad}(E'_C)/W$ over $x$ is the connected component
of $\text{Ad}(E_N)_x$ containing the identity element. Recall that for any $z\, \in\, E_W$,
the fiber ${\mathbb T}'_z$ in \eqref{tp} is the connected component of the center of the group
$\text{Ad}(E'_C)_z$ containing the identity element. Consequently, for any $x\, \in\, X$, the fiber
${\mathbb T}_x$ in \eqref{e10} is the connected component, containing the
identity element, of the center of the fiber of $\text{Ad}(E'_C)/W$ over $x$.
\end{remark}

Combining the results of Section \ref{se2.1} and Section \ref{se2.2}, we have the following:

\begin{theorem}\label{thmse1}
Let $E_G$ be a holomorphic principal $G$--bundle over $M$ and $T\, \subset\, G$ a torus
containing $Z_0(G)$. The normalizer (respectively, centralizer) of $T$ in $G$ will be denoted by
$N$ (respectively, $C$), while the Weyl group $N/C$ will be denoted by $W$.
There is a natural bijective correspondence between the following two:
\begin{enumerate}
\item Torus subbundles $\mathbb T$ of ${\rm Ad}(E_G)$ such that for some
(hence every) $x\, \in\, M$, the 
fiber ${\mathbb T}_x$ lies in the conjugacy class of tori in ${\rm Ad}(E_G)$ determined by $T$.

\item Quadruples of the form $(E_W,\, \phi,\, E'_C,\, \tau)$, where $\phi\, :\, E_W\,
\longrightarrow\, M$ is a principal $W$--bundle, $\phi^*E_G\, \supset\, E'_C\,
\stackrel{\psi}{\longrightarrow}\, E_W$ is a holomorphic
reduction of structure group of $\phi^* E_G$ to $C$, and
$$
\tau\,:\, E'_C\times N \, \longrightarrow\, E'_C
$$
is a holomorphic action of $N$ on $E'_C$ extending the natural action of $C$ on $E'_C$,
such that the diagram of maps
$$
\begin{matrix}
E'_C\times N & \stackrel{\tau}{\longrightarrow} & E'_C\\
\Big\downarrow && ~\Big\downarrow\psi \\
E_W\times W := (E'_C/C)\times (N/C)& \longrightarrow & E_W
\end{matrix}
$$
is commutative, where $E'_C\times N\, \longrightarrow\, (E'_C/C)\times (N/C)\,=\,
(E'_C\times N)/(C\times C)$ is the quotient map.
\end{enumerate}
\end{theorem}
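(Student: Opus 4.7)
The plan is to package together the two constructions already carried out in Sections \ref{se2.1} and \ref{se2.2} and then verify that they are mutually inverse. For the forward direction $(1)\Rightarrow(2)$, given a torus subbundle $\mathbb{T}\subset \text{Ad}(E_G)$ whose fiber conjugacy class matches that of $T$, I would define $E_N$ by \eqref{en}--\eqref{e6} and use the argument following \eqref{enc} to conclude that $E_N$ is a holomorphic principal $N$--bundle providing a reduction of structure group of $E_G$ to $N$. Setting $E_W := E_N/C$ with the quotient map $\phi$ of \eqref{e7}, and $E'_C := E_N$ regarded as a principal $C$--bundle over $E_W$ (Lemma \ref{lem1}), the required $N$--action $\tau$ on $E'_C$ is simply the restriction to $E_N$ of the $G$--action on $E_G$; commutativity of the square is then tautological, as already observed in \eqref{e8}.

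For the reverse direction $(2)\Rightarrow(1)$, I would take the quadruple $(E_W,\phi,E'_C,\tau)$ and form the principal $N$--bundle $E_N\to M$ via the action of $N$ on $E'_C$ and the composition $\phi\circ\psi$, as discussed just before Lemma \ref{lem3}; Lemma \ref{lem3} then identifies the extension $E_N\times^N G$ with $E_G$. Lemma \ref{lem2} supplies a $W$--action on $\text{Ad}(E'_C)$ preserving the bundle $\mathbb{T}'\subset \text{Ad}(E'_C)$ of identity components of centers defined in \eqref{tp}, and \eqref{e10} produces the torus subbundle $\mathbb{T} := \mathbb{T}'/W \subset \text{Ad}(E_G)$. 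Remark \ref{rem1} confirms that the fiber of $\mathbb{T}$ at each $x\in M$ is the identity component of the center of the corresponding fiber of $\text{Ad}(E'_C)/W$, hence lies in the conjugacy class of $T$ as required.

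The remaining task is to verify that the two assignments are mutual inverses. Starting from a subbundle $\mathbb{T}$ as in (1) and running the round trip, the point to check is that the bundle of identity components of centers of $\text{Ad}(E'_C)$, quotiented by $W$, recovers $\mathbb{T}$. This reduces to a pointwise statement: for any $y\in E_N$, under the identification of $\text{Ad}(E_G)_{p_0(y)}$ with $G$ induced by $y$, the fiber $\mathbb{T}_{p_0(y)}$ is sent to $T$, which is exactly the identity component of the center of $C$; this follows from the definition of $E_N$ in \eqref{e6} together with the defining property of $C$ as the centralizer of $T$. Conversely, starting from a quadruple and forming $\mathbb{T}$ via \eqref{e10}, the argument after \eqref{enc} characterizes the $E_N$ one recovers as the set of frames $y\in E_G$ for which the induced isomorphism $\text{Ad}(E_G)_{p_0(y)}\cong G$ sends $\mathbb{T}_{p_0(y)}$ into $T$; by construction this is precisely the image of $E'_C$ in $E_G$ via \eqref{inte}, so the original quadruple is recovered.

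The main obstacle I expect is not logical but bookkeeping: keeping track of the several identifications $E_N = E'_C$, $\text{Ad}(\phi^*E_G) = \phi^*\text{Ad}(E_G)$, and the compatibility of the $W$--quotient on $\text{Ad}(E'_C)$ with the descent $\text{Ad}(\phi^*E_G)/W = \text{Ad}(E_G)$. Once these identifications are spelled out explicitly, both mutual-inverse verifications collapse to the pointwise tautology that the identity component of the center of $C\subset G$ is $T$, which is what forces the two constructions in Sections \ref{se2.1} and \ref{se2.2} to match.
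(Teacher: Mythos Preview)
Your proposal is correct and follows exactly the paper's approach: the theorem is stated in the paper as the combination of the constructions in Sections~\ref{se2.1} and~\ref{se2.2}, with the mutual-inverse claim asserted in the paragraph immediately preceding the theorem statement. If anything, your pointwise verification that the two assignments are inverse (reducing to the fact that $T$ is the identity component of the center of $C$) spells out more detail than the paper itself provides.
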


\subsection{A tautological connection on a torus bundle}

We return to the set-up of Section \ref{se2.1}. As in \eqref{e2}, let
$$
{\mathbb T}\, \subset\, \text{Ad}(E_G)\, \stackrel{p}{\longrightarrow}\, M
$$
be a holomorphic sub-fiber bundle containing $M\times Z_0(G)$ such that
for every point $x\, \in\, M$, the fiber
$$
{\mathbb T}_x\,:=\, {\mathbb T}\cap p^{-1}(x)\,\subset\, \text{Ad}(E_G)_x
$$
is a torus.

A flat connection on the fiber bundle $\mathbb T$ is said to be compatible with the
group structure of the fibers of $\mathbb T$ if for any two locally defined flat sections
$s$ and $t$ of $\mathbb T$, defined over an open subset $U\, \subset\, M$, the section
$s\cdot t$ of ${\mathbb T}\vert_U$ is again flat.

\begin{proposition}\label{prop1}
There is a tautological flat holomorphic connection on $\mathbb T$ which is compatible with the
group structure of the fibers of $\mathbb T$.
\end{proposition}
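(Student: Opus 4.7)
My plan is to exploit the correspondence established in Theorem \ref{thmse1} and the canonical trivialization of the bundle ${\mathbb T}'$ observed in Section \ref{se2.2}. Given the torus subbundle $\mathbb T$, let $(E_W,\,\phi,\,E'_C,\,\tau)$ be the associated quadruple and recall from \eqref{tp} and \eqref{e10} that on $E_W$ there is a torus subbundle ${\mathbb T}'\subset{\rm Ad}(E'_C)$, and that ${\mathbb T}\,=\,{\mathbb T}'/W$ as a subbundle of ${\rm Ad}(E_G)$. The key observation I will use is that ${\mathbb T}'$ is canonically identified with the trivial bundle $E_W\times T\,\longrightarrow\, E_W$ via the isomorphism described right after \eqref{tp}: a point $(y,z)\,\in\, E_W\times T$ corresponds to the class of $(y',z)$ in $\text{Ad}(E'_C)$ for any $y'\,\in\,(E'_C)_y$, independence from $y'$ holding because $C$ centralizes $T$.

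With this trivialization in hand, I would equip $E_W\times T$ with its product flat holomorphic connection, namely the one whose flat sections are the constant sections $y\,\longmapsto\,(y,t)$ for $t\,\in\, T$. This connection is manifestly compatible with the fiberwise group structure of $T$: the pointwise product of two constant $T$-valued sections is again a constant $T$-valued section.

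Next I would verify that this tautological connection on ${\mathbb T}'\,=\, E_W\times T$ is $W$-equivariant, so that it descends to a connection on ${\mathbb T}\,=\,{\mathbb T}'/W$ over $M$. From the proof of Lemma \ref{lem2}, the action of $n\,\in\, N$ on $\text{Ad}(E'_C)$ is $[(y',z)]\,\longmapsto\,[(\tau(y',n),\,n^{-1}zn)]$. Restricting to ${\mathbb T}'$ and passing to the trivialization $E_W\times T$, this becomes $(y,z)\,\longmapsto\,(y\cdot[n],\,n^{-1}zn)$, where $[n]\,\in\, W\,=\, N/C$ acts on $E_W$ as in \eqref{e9}, and the $T$-component is modified by the conjugation automorphism of $T$ determined by $n$ (well-defined on $W$ since $C$ centralizes $T$). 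Since $W$ acts on $T$ by holomorphic group automorphisms, it carries constant $T$-valued sections to constant $T$-valued sections, so the product connection is preserved. The quotient thus inherits a flat holomorphic connection on $\mathbb T$ over $M$, and since the action on the $T$-factor is by group homomorphisms, compatibility with the group structure of the fibers is preserved under descent.

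The only real subtlety I see is bookkeeping around the $W$-action: one must be careful that the formula $n^{-1}zn$ descends from $N$ to $W\,=\, N/C$ as an action on $T$, which is exactly where the fact that $C$ centralizes $T$ is used. Everything else — holomorphicity, flatness, and compatibility with the group law — is immediate from the product structure on $E_W\times T$ and the fact that $\phi$ is an \'etale covering, so that the descended connection is again holomorphic and flat. I do not anticipate any computational obstacle; the content of the proposition is essentially that a torus subbundle of ${\rm Ad}(E_G)$ with fixed conjugacy type of fiber has discrete structure group (contained in the outer action of $W$ on $T$), which forces the existence of a canonical flat structure.
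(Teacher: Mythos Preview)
Your proposal is correct and follows essentially the same route as the paper: pull back to the \'etale cover $E_W$, use the canonical trivialization ${\mathbb T}'\cong E_W\times T$ to equip it with the trivial flat holomorphic connection, check that the $W$--action preserves this connection, and descend to $\mathbb T$ over $M$. Your treatment is in fact slightly more explicit than the paper's, since you write out the $W$--action on $E_W\times T$ and address compatibility with the group structure directly, whereas the paper simply asserts that the trivial connection is ``clearly'' $W$--invariant.
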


\begin{proof}
Consider the \'etale Galois covering $\phi\, :\, E_W\, \longrightarrow\, M$ is
\eqref{e7}. Let $$E'_C\, \subset\, \phi^* E_G$$ be the holomorphic reduction of structure
group constructed in Lemma \ref{lem1}. The adjoint action of $C$ on the torus
$T$ in \eqref{e3} is trivial, because $T$ is contained in the center of $C$. Therefore, $T$ defines
a trivial subbundle
\begin{equation}\label{se2l}
E_W\times T\, \subset\, \text{Ad}(E'_C)\, \subset\, \text{Ad}(\phi^*E_N)\,=\,
\phi^*\text{Ad}(E_N)\, \subset \, \phi^*\text{Ad}(E_G)\,=\,
\text{Ad}(\phi^*E_G)\, ,
\end{equation}
where $E_N$ is the principal $N$--bundle constructed in \eqref{en}. Note that $E_W\times T\,
\subset\, \text{Ad}(E'_C)$ is the bundle consisting of connected components of the centers,
containing the identity element, of the fibers of $\text{Ad}(E'_C)$, and $\text{Ad}(E'_C)$ is the
connected component of $\text{Ad}(\phi^*E_N)\,=\, \phi^*\text{Ad}(E_N)$ containing the
section given by the identity elements of the fibers.

Let $D_0$ denote the trivial (flat) connection on the trivial bundle $E_W\times 
T\,\longrightarrow\, E_W$ in \eqref{se2l}. The tautological action of the Galois group 
$W\,=\, \text{Gal}(\phi)$ on $\phi^*\text{Ad}(E_N)$ evidently preserves the subbundle 
$E_W\times T \, \subset\, \phi^*\text{Ad}(E_N)$. The resulting action of $W$ on $E_W\times 
T$ clearly preserves the above trivial connection $D_0$ on $E_W\times T\,\longrightarrow\, 
E_W$.

Since the connection $D_0$ is preserved by the action of $W\,=\, \text{Gal}(\phi)$, it
produces a flat holomorphic connection on the subbundle $(E_W\times T)/W$
$$
\text{Ad}(E_N)\,=\, (\phi^*\text{Ad}(E_N))/W \, \supset\, (E_W\times T)/W \,
\longrightarrow\, E_W/W\,=\, M
$$
of $\text{Ad}(E_N)$. But this subbundle $(E_W\times T)/W$ is identified with $\mathbb T$, because
$E_W\times T\, \subset\, \phi^*\text{Ad}(E_G)$ in \eqref{se2l} coincides with $\phi^*{\mathbb T}
\, \subset\, \phi^*\text{Ad}(E_G)$ (see \eqref{e10}; recall that ${\mathbb T}'$ in
\eqref{e10} is identified with $E_W\times T$).
\end{proof}

\section{Torus subbundles and connection}\label{se3}

\subsection{Connections on a principal bundle}\label{se3.1}

Let $H$ be a complex Lie group. The Lie algebra of $H$ will be denoted by $\mathfrak h$.
Let $$\beta\, :\, E_H\, \longrightarrow\, M$$ be a holomorphic principal $H$--bundle on $M$. The
holomorphic tangent bundles of $M$ and $E_H$ will be denoted by $TM$ and $TE_H$ respectively. The
quotient
$$
\text{At}(E_H)\, :=\, (TE_H)/H\, \longrightarrow\, E_H/H \,=\, M
$$
is a holomorphic vector bundle which is called the \textit{Atiyah bundle} for $E_H$. The
differential $d\beta\, :\, TE_H\, \longrightarrow\, \beta^*TM$ for the above
projection $\beta$, being $H$--invariant, produces a surjective homomorphism
$$
d'\beta\, :\, \text{At}(E_H)\, \longrightarrow\, TM\, .
$$
The kernel of $d'\beta$ is the relative tangent bundle for $\beta$ and it is identified with
the adjoint vector bundle $\text{ad}(E_H)\,=\, \text{kernel}(d\beta)/H$.
We recall that $\text{ad}(E_H)$ is the quotient of $E_H\times\mathfrak h$, where two points $(y,\, v)$
and $(y_1,\, v_1)$ of $E_H\times\mathfrak h$ are identified if there is some $h_0\, \in\, H$ such that
$y_1\,=\, yh_0$ and $v_1\,=\, \text{Ad}(h^{-1}_0)(v)$.
Therefore, $\text{ad}(E_H)\, \longrightarrow\, M$ is the Lie algebra bundle for the bundle
$\text{Ad}(E_H)$ of Lie groups on $M$. We have the short exact sequence of holomorphic vector bundles on $M$
$$
0\, \longrightarrow\, \text{ad}(E_H)\, \longrightarrow\, \text{At}(E_H) 
\, \stackrel{d'\beta}{\longrightarrow}\, TM \, \longrightarrow\, 0\, ,
$$
which is know as the Atiyah exact sequence. A complex connection on $E_H$ is a $C^\infty$
homomorphism of vector bundles
$$
D_0\, :\, TM\, \longrightarrow\, \text{At}(E_H)
$$
such that $(d'\beta)\circ D_0\,=\, \text{Id}_{TM}$ (see \cite{At}).
A holomorphic connection on $E_H$ is a holomorphic
homomorphism of vector bundles
$$
D_0\, :\, TM\, \longrightarrow\, \text{At}(E_H)
$$
such that $(d'\beta)\circ D_0\,=\, \text{Id}_{TM}$.

Let $H'$ be a complex Lie group and $\eta\, :\, H\, \longrightarrow\, H'$ a holomorphic
homomorphism of Lie groups. Let
$$
E_{H'}\, :=\, E_H\times^\eta H\, \longrightarrow\, M
$$
be the holomorphic principal $H'$--bundle over $M$ obtained by extending the structure
group of the holomorphic principal $H$--bundle $E_H$ using the above homomorphism $\eta$. Recall
that $E_{H'}$ is the quotient of $E_H\times H'$ where any two points $(y,\, z)$ and
$(y_1,\, z_1)$ of $E_H\times H'$ are identified if there is an element $h\, \in\, H$ such that
$y_1\,=\, yh$ and $z_1\,=\, \eta(h^{-1})z$. So sending any $y\, \in\, E_H$ to the equivalence
class of $(y,\, e)$, where $e\, \in\, H'$ is the identity element, we get a holomorphic map
$$
\eta_0\, :\, E_H\, \longrightarrow\, E_{H'}
$$
which satisfies the equation
\begin{equation}\label{ae}
\eta_0(yh)\,=\, \eta_0(y)\eta(h)
\end{equation}
for all $y\, \in\, E_H$ and $h\, \in\, H$. The action of $H$ on $E_H$ produces an action of
$H$ on the tangent bundle $TE_H$; using $\eta$, the action of $H'$ on the tangent bundle
$TE_{H'}$ produces an action of $H$ on $TE_{H'}$. From \eqref{ae} it follows immediately that
the differential $d\eta_0$ of the map $\eta_0$ is $H$--equivariant. Therefore, $d\eta_0$
induces a homomorphism
$$
\eta_1\, :\, \text{At}(E_H)\, =\, (TE_H)/H\, \longrightarrow\, (TE_{H'})/H'\,=\, \text{At}(E_{H'})\, .
$$
This homomorphism $\eta_1$ satisfies the equation $\eta_1(\text{ad}(E_H))\, \subset\,
\text{ad}(E_{H'})$; in other words, we have a commutative diagram of holomorphic homomorphisms
of vector bundles
$$
\begin{matrix}
0 & \longrightarrow & \text{ad}(E_H) & \longrightarrow & \text{At}(E_H) 
& \stackrel{d'\beta}{\longrightarrow}& TM & \longrightarrow & 0\\
&&\Big\downarrow && \,\,\,\,\,\, \Big\downarrow \eta_1 &&\Vert\\
0 & \longrightarrow & \text{ad}(E_{H'}) & \longrightarrow & \text{At}(E_{H'}) 
& \longrightarrow & TM & \longrightarrow & 0
\end{matrix}
$$
where the horizontal sequences are the Atiyah exact sequences.
Therefore, if $D_0\, :\, TM\, \longrightarrow\, \text{At}(E_H)$ is a complex connection
on $E_H$, then
$$
\eta_1 \circ D_0\, :\, TM\, \longrightarrow\, \text{At}(E_{H'})
$$
is a complex connection on $E_{H'}$; this $\eta_1 \circ D_0$ is called the
connection on $E_{H'}$ induced by the connection $D_0$ on $E_H$. If the connection $D_0$
is holomorphic, then the induced connection $\eta_1 \circ D_0$ is clearly holomorphic also.

\subsection{Criterion for induced connection}

We continue with the set-up of Section \ref{se2.1}. Take a torus subbundle
$$
{\mathbb T}\, \subset\, \text{Ad}(E_G)\, \stackrel{p}{\longrightarrow}\, M
$$
as in \eqref{e2}. Fix $T$ as in \eqref{e3}, and construct $N$ as in \eqref{e4}.
Let $E_N\, \subset\, E_G$ be the holomorphic reduction of structure group to $N$
constructed in \eqref{en}.

Let $D_0$ be a complex connection on $E_G$. Our aim in this section is to establish a
necessary and sufficient condition for $D_0$ to be induced by a complex connection on $E_N$.

The connection $D_0$ on $E_G$ induces a complex connection on every holomorphic
fiber bundle $E_G(F)$ associated to $E_G$ for a holomorphic action of $G$ on a complex
manifold $F$. In particular, $D_0$ induces a complex connection on the bundle
$\text{Ad}(E_G)$ associated to $E_G$ for the adjoint action of $G$ on itself. Let $D_1$
denote the complex connection on $\text{Ad}(E_G)$ induced by $D_0$.

\begin{theorem}\label{thm1}
The connection $D_0$ on $E_G$ is induced by a complex connection on the principal
$N$--bundle $E_N$ if and only if the induced connection $D_1$ on ${\rm Ad}(E_G)$ preserves
the torus subbundle ${\mathbb T}\, \subset\, {\rm Ad}(E_G)$.
\end{theorem}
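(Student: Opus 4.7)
My plan is to translate both conditions into statements about horizontal distributions and exploit the characterization of $E_N$ provided by \eqref{en}--\eqref{e6}: a point $y \in E_G$ lies in $E_N$ if and only if $f_0(\{y\} \times T) = \mathbb{T}_{p_0(y)}$. The containment $f_0(\{y\} \times T) \subset \mathbb{T}_{p_0(y)}$ is the defining condition, and equality follows because both sides are connected tori in the fiber $\text{Ad}(E_G)_{p_0(y)}$ lying in the same conjugacy class, so one cannot be properly contained in the other. The induced connection $D_1$ on $\text{Ad}(E_G)$ (see \eqref{e0}) is the standard connection on the associated bundle $(E_G \times G)/G$ whose horizontal curves through $[y, z]$ are precisely $t \mapsto f_0(\tilde\gamma(t), z)$, where $\tilde\gamma$ is a $D_0$-horizontal curve in $E_G$ through $y$ and $z \in G$ is fixed.

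For the forward direction, suppose $D_0 = \eta_1 \circ D_N$ for some complex connection $D_N$ on $E_N$, where $\eta_1$ is the Atiyah-bundle map associated to the inclusion $N \hookrightarrow G$ as in Section \ref{se3.1}. Writing $\text{Ad}(E_G) = E_N \times^N G$, with $N$ acting on $G$ by conjugation, $D_1$ is the connection on this associated bundle induced by $D_N$. By the defining property of $N$ as the normalizer of $T$, the $N$-action on $G$ preserves $T$, so $\mathbb{T} = E_N \times^N T$ sits inside $\text{Ad}(E_G)$ as an associated subbundle whose connection (also induced by $D_N$) is the restriction of $D_1$. Hence $D_1$ preserves $\mathbb{T}$.

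For the reverse direction, let $H \subset TE_G$ be the $G$-invariant horizontal distribution of $D_0$. A complex connection on $E_N$ inducing $D_0$ exists if and only if $H_y \subset T_y E_N$ for every $y \in E_N$; in that case $H|_{E_N}$ is $N$-invariant (since $N \subset G$) and transverse to the vertical bundle of $E_N \to M$ (inheriting transversality from $E_G \to M$), so it defines the required connection. To establish the tangency $H_y \subset T_y E_N$, fix $y \in E_N$ and $X \in T_{p_0(y)} M$, choose any smooth curve $\gamma$ in $M$ with $\gamma(0) = p_0(y)$ and $\dot\gamma(0) = X$, and let $\tilde\gamma$ be its $D_0$-horizontal lift with $\tilde\gamma(0) = y$. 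For each $z \in T$, the curve $t \mapsto f_0(\tilde\gamma(t), z)$ is $D_1$-horizontal and starts at the point $f_0(y,z) \in f_0(\{y\} \times T) = \mathbb{T}_{p_0(y)}$; since $D_1$ preserves $\mathbb{T}$, it remains in $\mathbb{T}$. Varying $z$ over $T$ gives $f_0(\{\tilde\gamma(t)\} \times T) \subset \mathbb{T}_{p_0(\tilde\gamma(t))}$ for all $t$, so by the initial characterization $\tilde\gamma(t) \in E_N$. Differentiating at $t = 0$ then yields $\tilde\gamma'(0) = H_y(X) \in T_y E_N$, completing the proof. The main obstacle is simply the careful bookkeeping that $D_1$-horizontal curves in $\text{Ad}(E_G)$ are exactly those of the form $t \mapsto f_0(\tilde\gamma(t), z)$, together with the identification of the horizontal distribution of the reconstructed $D_N$ as $H|_{E_N}$; once these identifications are set up, the proof is essentially formal.
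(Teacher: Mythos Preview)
Your argument is correct, but it follows a genuinely different route from the paper's.

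The paper linearizes everything: it passes from the group bundles $\mathbb T\subset\mathcal G\subset {\rm Ad}(E_G)$ to the corresponding Lie algebra bundles $\widetilde{\mathbb T}\subset\widetilde{\mathcal G}={\rm ad}(E_N)\subset{\rm ad}(E_G)$ and exploits the Leibniz identity $D'([s,t])=[D'(s),t]+[s,D'(t)]$. For the forward direction it uses that $\widetilde{\mathbb T}$ is the fiberwise center of $\widetilde{\mathcal G}$, so the Leibniz rule forces the induced connection on $\widetilde{\mathcal G}$ to preserve $\widetilde{\mathbb T}$. For the converse it uses that $\widetilde{\mathcal G}$ is the fiberwise centralizer of $\widetilde{\mathbb T}$, so once $\widetilde{\mathbb T}$ is preserved, the same Leibniz argument shows $\widetilde{\mathcal G}={\rm ad}(E_N)$ is preserved; the passage from ``$D_0$ preserves ${\rm ad}(E_N)$'' to ``$D_0$ is induced from $E_N$'' then rests on the fact that ${\rm Lie}(N)={\rm Lie}(C)$ is its own normalizer in ${\rm Lie}(G)$.

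Your approach stays at the level of the group bundles and principal bundles. In the forward direction you simply observe $\mathbb T=E_N\times^N T\subset E_N\times^N G={\rm Ad}(E_G)$, an associated subbundle because $N$ normalizes $T$, hence automatically preserved by any connection induced from $E_N$; this is shorter than the paper's Leibniz computation. In the converse you use the defining description of $E_N$ from \eqref{en}--\eqref{e6} directly, showing via parallel transport that $D_0$-horizontal curves through points of $E_N$ remain in $E_N$, hence $H\vert_{E_N}\subset TE_N$, which is exactly the condition for $D_0$ to factor through ${\rm At}(E_N)\subset{\rm At}(E_G)$. This bypasses the normalizer argument entirely.

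Both proofs are valid. The paper's method highlights the algebraic mechanism (centers and centralizers in the adjoint bundle) and works entirely with covariant derivatives of sections, which is convenient if one later wants to track holomorphicity or curvature at the Lie-algebra level. Your method is more elementary and self-contained: it needs only the construction of $E_N$ and standard facts about horizontal lifts in associated bundles, and it makes the geometric content (preservation of $\mathbb T$ under parallel transport is literally the condition that horizontal lifts stay in $E_N$) completely transparent.
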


\begin{proof}
First assume that there is a complex connection $D_2$ on the principal
$N$--bundle $E_N$ such that the connection on $E_G$ induced by $D_2$ coincides with $D_0$.
Let $D_3$ be the complex connection on ${\rm Ad}(E_N)$ induced by $D_2$. Let
${\mathcal G}\, \subset\, {\rm Ad}(E_N)$ be the sub-fiber bundle whose fiber over any
$x\, \in\, M$ is the connected component of ${\rm Ad}(E_N)_x$ containing the identity element.
So $\mathcal G$ coincides with $\text{Ad}(E'_C)/W$ in Remark \ref{rem1}. The connection $D_3$
on ${\rm Ad}(E_N)$ clearly preserves ${\mathcal G}$. Let $D_4$ denote the complex connection on
${\mathcal G}$ given by $D_3$.

Since $D_0$ is induced by $D_2$, it follows that the
connection $D_1$ on ${\rm Ad}(E_G)$ preserves the torus subbundle ${\mathbb T}$
if and only if the connection $D_4$ on $\mathcal G$ preserves $\mathbb T$.

We recall that the torus subbundle ${\mathbb T}\, \subset\, \mathcal G$ is the bundle of 
connected components of centers, containing the identity element, of the fibers of
$\mathcal G$ (see Remark \ref{rem1}). From this it can be deduced that
the connection on $D_4$ on $\mathcal G$ preserves $\mathbb T$. To prove this, it is
convenient to switch to the Lie algebra bundles from the Lie group bundles, because
it is easier the work with connections on vector bundles.

Let $\widetilde{\mathbb T}$ (respectively, $\widetilde{\mathcal G}$) be the Lie algebra
bundle over $M$ corresponding to the Lie group bundle ${\mathbb T}$ (respectively, ${\mathcal G}$).
Note that $\widetilde{\mathcal G}$ coincides with $\text{ad}(E_N)$, because $N/C$ is a finite
group so $\text{Lie}(N)\,=\, \text{Lie}(C)$. Since ${\mathbb T}\, \subset\, \mathcal G$ is the
bundle of connected components of centers, containing the identity element, of the fibers of
$\mathcal G$, it follows immediately
that for any $x\, \in\, M$, the fiber $\widetilde{\mathbb T}_x$ is the center of the
Lie algebra $\widetilde{\mathcal G}_x$. Let $D'_4$ be the complex connection on the
vector bundle $\widetilde{\mathcal G}$ given by the connection $D_4$ on $\mathcal G$; note
that $D'_4$ coincides with the connection on $\text{ad}(E_N)$ induced by the connection
$D_2$ on $E_N$.
If $s$ and $t$ are locally defined holomorphic sections of
$\widetilde{\mathcal G}$ defined over an open subset $U\, \subset\, M$, then we have
$$
D'_4([s, \,t])\,=\, [D'_4(s),\, t] + [s,\, D'_4(t)]\, ,
$$
because $D'_4$ is compatible with the Lie algebra structure of the fibers of $\widetilde{\mathcal G}$.
Now if $s$ is a section of the subbundle $\widetilde{\mathbb T}$, then
$$
[s,\, t]\,=\, 0\,=\, [s,\, D'_4(t)]\, .
$$
Therefore, we conclude that $[D'_4(s),\, t] \,=\, 0$ if $s$ is a section of
$\widetilde{\mathbb T}\vert_U$. Hence $D'_4(s)$ is a $C^\infty$ section of
$\widetilde{\mathbb T}\vert_U\otimes \Omega^1_U$
if $s$ is a holomorphic section of $\widetilde{\mathbb T}\vert_U$. Consequently, the connection
$D'_4$ on $\widetilde{\mathcal G}$ preserves the subbundle $\widetilde{\mathbb T}$.
Hence the connection $D_4$ on $\mathcal G$ preserves $\mathbb T$.

To prove the converse, assume that the connection $D_0$ on $E_G$ has the following property:
the connection $D_1$ on ${\rm Ad}(E_G)$ induced by $D_0$ preserves
the torus subbundle ${\mathbb T}\, \subset\, {\rm Ad}(E_G)$.

Recall that ${\rm ad}(E_G)$ is the Lie algebra bundle on $M$ corresponding to the 
bundle ${\rm Ad}(E_G)$ of groups. Let $D'_1$ be the complex connection on the vector bundle 
${\rm ad}(E_G)$ induced by $D_1$.
As before, $\widetilde{\mathbb T}$ denotes the Lie algebra bundle on $M$ corresponding to 
$\mathbb T$, so $\widetilde{\mathbb T}$ is an abelian subalgebra bundle of
${\rm ad}(E_G)$. The given condition that the connection $D_1$ on ${\rm Ad}(E_G)$ preserves
$\mathbb T$ immediately implies that the connection $D'_1$ preserves the subbundle
$\widetilde{\mathbb T}\, \subset\, {\rm ad}(E_G)$.

If $s$ and $t$ are locally defined holomorphic 
sections of ${\rm ad}(E_G)$ defined over an open subset $U\, 
\subset\, M$, then we have
$$
D'_1([s, \,t])\,=\, [D'_1(s),\, t] + [s,\, D'_1(t)]\, ;
$$
because $D'_1$ is compatible with the Lie algebra structure of the fibers of ${\rm ad}(E_G)$.
Now if $s$ is a section of the subbundle $\widetilde{\mathbb T}\vert_U$, and
$t$ is a section of $\widetilde{\mathcal G}\vert_U$ (defined earlier), then we have
$$
[s, \,t]\,=\, 0\,=\, [D'_1(s),\, t]\, ;
$$
indeed, $[s, \,t]\,=\, 0$ because $\widetilde{\mathbb T}$ is the bundle the centers of
$\widetilde{\mathcal G}$, and $[D'_1(s),\, t]\,=\, 0$ because
$D'_1$ preserves $\widetilde{\mathcal T}$. Therefore, we have
$$[s,\, D'_1(t)]\,=\, 0$$ if $s$ is a holomorphic
section of the subbundle $\widetilde{\mathbb T}\vert_U$ and $t$ is a holomorphic
section of $\widetilde{\mathcal G}\vert_U$. From this it follows that the subbundle
$\widetilde{\mathcal G}\, \subset\, \text{ad}(E_G)$ is preserved by the connection
$D'_1$ on $\text{ad}(E_G)$. Indeed, as noted before,
${\mathcal G}\,=\, \text{ad}(E_N)\, \subset\, \text{ad}(E_G)$, because the Lie 
algebras $\text{Lie}(N)$ and $\text{Lie}(C)$ coincide (the quotient $N/C$ is a finite group).
Hence for any $x\, \in\, M$, the subalgebra $$\widetilde{\mathcal
G}_x\, \subset\, \text{ad}(E_G)_x$$ is the centralizer of $\widetilde{\mathbb T}_x$.

The normalizer of $\text{Lie}(C)$ in $\text{Lie}(G)$ is $\text{Lie}(C)$ itself; this is
because the normalizer of $C$ in $G$ is $N$, and $\text{Lie}(C)\,=\,\text{Lie}(N)$. Therefore,
from the above observation that $\widetilde{\mathcal G}\,=\, \text{ad}(E_N)$ is preserved by
the connection $D'_1$ on $\text{ad}(E_G)$ it follows that the connection $D_0$ on $E_G$
preserves $E_N$. In other words, the connection $D_0$ on $E_G$ is induced by a connection on $E_N$.
This completes the proof.
\end{proof}

\begin{proposition}\label{prop2}
Let $D_0$ be a complex connection on $E_G$ such that
the induced connection $D_1$ on ${\rm Ad}(E_G)$ preserves
the torus subbundle ${\mathbb T}\, \subset\, {\rm Ad}(E_G)$. Then the connection on
$\mathbb T$ given by $D_1$ coincides with the tautological connection on
$\mathbb T$ in Proposition \ref{prop1}.
\end{proposition}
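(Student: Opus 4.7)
The plan is to pull the whole picture back to the \'etale cover $\phi\colon E_W\to M$, where $\phi^*\mathbb T$ is identified by \eqref{se2l} with the trivial torus bundle $E_W\times T\subset\text{Ad}(E'_C)$, and to show that the pulled-back connection on $E_W\times T$ is the trivial flat connection. Since the tautological connection from Proposition \ref{prop1} was constructed precisely as the $W$-descent of that trivial connection, and since a connection on $\mathbb T$ is determined by its pullback to $E_W\times T$, this will force $D_1|_{\mathbb T}$ to coincide with the tautological connection.

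By Theorem \ref{thm1}, the hypothesis produces a complex connection $\widetilde D$ on $E_N$ that induces $D_0$. My first step would be to verify that $\phi^*\widetilde D$ preserves the tautological reduction $E'_C\subset\phi^*E_N$ of Lemma \ref{lem1}. Under the identification $E'_C=E_N$ in which the projection $E'_C\to E_W$ becomes the quotient map $q_C\colon E_N\to E_N/C=E_W$, the $\phi^*\widetilde D$-horizontal lift at $(q_C(e),e)$ of $w\in T_{q_C(e)}E_W$ is the pair $(w,\tilde w)$, where $\tilde w\in T_eE_N$ is the $\widetilde D$-horizontal lift of $d\phi(w)$; because $\phi$ is \'etale, $d\phi$ is an isomorphism and $dq_C(\tilde w)=w$, so $(w,\tilde w)$ is tangent to $E'_C$. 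Thus $\phi^*\widetilde D$ restricts to a complex connection $\widetilde D_C$ on the principal $C$-bundle $E'_C$, and by functoriality of structure-group extension along $C\hookrightarrow N\hookrightarrow G$, the connection $\phi^*D_0$ on $\phi^*E_G$ coincides with the extension of $\widetilde D_C$ from $C$ to $G$. Consequently the induced connection $\phi^*D_1$ on $\text{Ad}(\phi^*E_G)=\phi^*\text{Ad}(E_G)$ preserves the subbundle $\text{Ad}(E'_C)$ and restricts there to the adjoint connection of $\widetilde D_C$.

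The main obstacle is this reduction step; once it is in hand, a standard central-element observation finishes the argument. Since $T$ is connected and lies in the center of $C$, the adjoint action of $C$ on $T$ is trivial, so for every $t_0\in T$ the constant function $E'_C\to C$ with value $t_0$ is $C$-equivariant and represents the section $z\mapsto[y,t_0]$ of the trivial subbundle $E_W\times T\subset\text{Ad}(E'_C)$. Being literally constant, this function has vanishing horizontal derivative with respect to $\widetilde D_C$, so the corresponding section is parallel for the adjoint connection of $\widetilde D_C$. Hence $\phi^*D_1$ restricts to the trivial connection on $E_W\times T=\phi^*\mathbb T$, which by the first paragraph identifies $D_1|_{\mathbb T}$ with the tautological connection of Proposition \ref{prop1}.
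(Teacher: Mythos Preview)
Your proof is correct and follows essentially the same route as the paper's: invoke Theorem \ref{thm1} to get a connection on $E_N$, pull back to $E_W$, pass to $E'_C$, and use the centrality of $T$ in $C$ to see that the induced connection on $E_W\times T\subset\text{Ad}(E'_C)$ is trivial, hence equals the one descended in Proposition \ref{prop1}. The only place you work harder than necessary is what you call ``the main obstacle'': the paper dispatches the reduction step in one line by noting that $E'_C$ is a union of connected components of $\phi^*E_N$ (since $C=N_0$ is open in $N$, the reduction $E'_C$ has full dimension and is open and closed in $\phi^*E_N$), so any connection on $\phi^*E_N$ automatically restricts to $E'_C$ without the horizontal-lift computation.
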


\begin{proof}
{}From Theorem \ref{thm1} we know that there is a complex connection $D_2$ on
the principal $N$--bundle $E_N$ that induces $D_0$. Consider the pulled back connection
$\phi^*D_2$ on $\phi^*E_N$, where $\phi$ is the projection from $E_W$ in \eqref{e7}.
This connection $\phi^*D_2$ gives a connection
on the principal $C$--bundle $E'_C$ in Lemma \ref{lem1}, because $E'_C$ is a union of some connected
component of $\phi^*E_N$. Let $D''$ be the complex
connection on $\text{Ad}(E'_C)$ induced by this connection on $E'_C$ given by
$\phi^*D_2$.

The adjoint action of $C$ on
$T$ is trivial because $T$ is contained in the center of $T$. Therefore,
\begin{itemize}
\item the subbundle
$$E_W\times T\, \subset\, \text{Ad}(E'_C)$$
in \eqref{se2l} is preserved by the connection $D''$ on $\text{Ad}(E'_C)$, and

\item the connection on $E_W\times T$ given by $D''$ coincides with the trivial
connection of the trivial bundle.
\end{itemize}
Now from the construction of the tautological connection on
$\mathbb T$ in Proposition \ref{prop1} it follows that it coincides with
the connection on $\mathbb T$ given by $D_1$.
\end{proof}

\begin{remark}\label{rem-c}
We use the notation in Theorem \ref{thm1}. Let $D_0$ be a complex connection on $E_G$ such that 
the induced connection $D_1$ on ${\rm Ad}(E_G)$ preserves the torus subbundle ${\mathbb T}\, 
\subset\, {\rm Ad}(E_G)$. Let $D_2$ be the complex connection on $E_N$ inducing $D_0$. If the 
connection $D_2$ is holomorphic, then $D$ is holomorphic, because holomorphic connections induce 
holomorphic connection, as noted in Section \ref{se3.1}. Since $N$ is a subgroup of $G$, and the 
principal $G$--bundle $E_G$ is the extension of structure group of the principal $N$--bundle 
$E_N$ for the inclusion of $N$ in $G$, it follows that the Atiyah bundle $\text{At}(E_N)$ is a 
holomorphic subbundle of $\text{At}(E_G)$. Therefore, if the connection $D$ is holomorphic then 
$D_2$ is also holomorphic.
\end{remark}

\subsection{Hermitian--Einstein connection and Levi reduction}\label{sehe}

In the subsection we assume that $M$ is K\"ahler and it is equipped with a K\"ahler form $\omega$. For any
torsionfree coherent analytic sheaf $W$ on $X$, define
$$
\text{degree}(W)\, :=\, \int_M c_1(\det W)\wedge \omega^{d-1}\, ,
$$
where $d\,=\, \dim_{\mathbb C} M$ and the determinant line bundle $\det W$ is constructed
as in \cite[Ch.~V, \S~6]{Ko}. A torsionfree coherent analytic sheaf $W$ is called \textit{stable}
(respectively, \textit{semistable}) if every coherent analytic subsheaf $F\, \subset\, W$
with $0\, <\, \text{rank}(F)\, <\, \text{rank}(W)$, we have
$$
\frac{\text{degree}(F)}{\text{rank}(F)}\,<\, 
\frac{\text{degree}(W)}{\text{rank}(W)}\ {\rm (respectively,}~
\frac{\text{degree}(F)}{\text{rank}(F)}\,\leq\, 
\frac{\text{degree}(W)}{\text{rank}(W)}{\rm )}
$$
(see \cite[p.~168]{Ko}).
Also, $W$ is called \textit{polystable} if it is semistable and direct sum stable sheaves.

We shall now recall a generalization of these notions to the context of principal bundles
\cite{Ra}, \cite{RS}.

An open dense subset $U$ of $M$ will be called big if the complement $M\setminus U$ is a complex
analytic subspace of $M$ of complex codimension at least two. For a holomorphic line bundle $L$
on a big open subset $\iota \, :\, U\, \hookrightarrow\, M$, the degree of $L$ is defined to
the degree of the direct image $\iota_* L$.

A character $\chi$ of a parabolic subgroup $P\,\subset\, G$ is called \textit{anti-dominant}
if the holomorphic line bundle on $G/P$ associated to $\chi$ is nef. Moreover, if the
associated line bundle is ample, then $\chi$ is called \textit{strictly anti-dominant}.

Let $E_G$ be a holomorphic principal $G$--bundle over $M$, where $G$, as before,
is a connected reductive affine complex algebraic group. It is called \textit{stable}
(respectively, \textit{semistable}) if for all triples of the form $(P,\, E_P,\, \chi)$, where
\begin{itemize}
\item{} $P\, \subset\, G$ is proper (not necessarily maximal) parabolic
subgroup,

\item $E_P\, \subset\, E_G$ is a holomorphic reduction of structure group of $E_G$ to
$P$ over a big open subset $U\, \subset\, M$, and

\item $\chi$ is a strictly anti-dominant character of $P$
which is trivial on the center of $G$,
\end{itemize}
the following holds:
$$
\text{degree}(E_P(\chi))\, > \, 0
$$
(respectively, $\text{degree}(E_P(\chi))\, \geq \, 0$), where $E_P(\chi)$ is the
holomorphic line bundle on $U$ associated to the principal $P$--bundle $E_P$ for the
character $\chi$. (See \cite{Ra}, \cite{RS}, \cite{AB}.)

Let $P$ be a parabolic subgroup of $G$ and $E_P\, \subset\, E_G$
a holomorphic reduction of structure group over $M$ of $E_G$ to the subgroup $P$. Such a reduction
of structure group is called \textit{admissible} if for every character $\chi$ of $P$ trivial
on the center of $G$, the associated holomorphic line bundle $E_P(\chi)$ on $M$ is of
degree zero.

A holomorphic principal $G$--bundle $E_G$ on $M$ is called \textit{polystable} if either $E_G$
is stable or there is parabolic subgroup $P\, \subset\, G$ and a holomorphic reduction of
structure group $E_{L(P)}\, \subset\, E_G$ over $M$ to a Levi factor $L(P)$ of $P$, such that
\begin{itemize}
\item the principal $L(P)$--bundle $E_{L(P)}$ is stable, and

\item the reduction of structure group of $E_G$ to $P$ given by the extension of the structure
group of $E_{L(P)}$ to $P$, using the inclusion of $L(P)$ in $P$, is admissible.
\end{itemize}
(See \cite{RS}, \cite{AB}.)

Fix a maximal compact subgroup
\begin{equation}\label{kg}
K_G\, \subset\, G\, .
\end{equation}
Let $E_{K_G}\, \subset\, E_G$ be a $C^\infty$ reduction of structure group over $M$
of $E_G$ to the subgroup $K_G$. Then there is a unique $C^\infty$ connection $\nabla$ on
$E_{K_G}$ such that the connection on $E_G$ induced by $\nabla$ is a complex connection
\cite[pp.~191--192, Proposition 5]{At}.

Let $z({\mathfrak k})$ denote the center of the Lie algebra of $K_G$. A $C^\infty$ reduction 
of structure group over $M$
$$E_{K_G}\, \subset\, E_G$$
is called a Hermitian--Einstein reduction if the corresponding connection $\nabla$ has the
property that the curvature ${\mathcal K}(\nabla)$ of $\nabla$ satisfies the equation
\begin{equation}\label{ez2}
{\mathcal K}(\nabla)\wedge \omega^{d-1} \,=\, c \omega^d
\end{equation}
for some $c\, \in\, z({\mathfrak k})$, where $d\,=\, \dim_{\mathbb C} M$. If $E_{K_G}$ is
a Hermitian--Einstein reduction, then the connection on $E_G$ induced by the corresponding
connection $\nabla$ on $E_{K_G}$ is called a Hermitian--Einstein connection.

A holomorphic principal $G$--bundle $E_G$ on $M$ admits a Hermitian--Einstein connection if 
and only if $E_G$ is polystable, and furthermore, if $E_G$ is polystable, then it has a unique 
Hermitian--Einstein connection. When $M$ is a complex projective manifold and $G\,=\, 
\text{GL}(r, {\mathbb C})$, this was proved in \cite{Do1}, \cite{Do2}; when $M$ is K\"ahler 
and $G\,=\, \text{GL}(r, {\mathbb C})$, this was proved in \cite{UY}; when $M$ is a complex 
projective manifold and $G$ is an arbitrary complex reductive group, this was proved in 
\cite{RS}; when $M$ is K\"ahler and $G$ is an arbitrary complex reductive group, this was 
proved in \cite{AB}.

Now consider the set-up of Theorem \ref{thmse1}; let $T$, $C$, $N$ and $W$ be as in
Theorem \ref{thmse1}.

Let $E_G$ be a holomorphic principal $G$--bundle over $M$. Let $\mathbb T$
be a torus subbundles of ${\rm Ad}(E_G)$ such that for some
(hence every) $x\, \in\, M$, the
fiber ${\mathbb T}_x$ lies in the conjugacy class of tori in ${\rm Ad}(E_G)$ determined by $T$.
Let $(E_W,\, \phi,\, E'_C,\, \tau)$ be the corresponding quadruple in Theorem \ref{thmse1}.

Equip $E_W$ with the K\"ahler form $\phi^*\omega$. Note that $E_W$ need not be connected.
Take a holomorphic principal $H$--bundle $E_H$ on $E_W$, where $H$ is a connected complex 
reductive affine algebraic group (we do not use the notation $G$ because it
may create confusion as $G$ is used above). We will call $E_H$ to be polystable if the
following two conditions hold:
\begin{enumerate}
\item the restriction of $E_H$ to each connected component of $E_W$ is polystable, and

\item for each character $\chi$ of $H$, the associated holomorphic line bundle $E_H(\chi)$
on $E_W$ has the property that degrees of its restriction to the connected components of
$E_W$ coincide. (If $E_W$ is connected then this condition is vacuously satisfied.)
\end{enumerate}
Fix a maximal compact subgroup $K_H\, \subset\, H$.
Let $z({\mathfrak k}_h)$ denote the center of the Lie algebra of $K_H$. A $C^\infty$ reduction 
of structure group over $E_W$ $$E_{K_H}\, \subset\, E_H$$
will be called a Hermitian--Einstein reduction if the corresponding connection $\nabla$ has the
property that there is an element $c\, \in\, z({\mathfrak k}_h)$ such
that the curvature ${\mathcal K}(\nabla)$ of $\nabla$ satisfies the equation
\begin{equation}\label{ez1}
{\mathcal K}(\nabla)\wedge (\phi^*\omega)^{d-1} \,=\, c (\phi^*\omega)^d\, .
\end{equation}
If $E_{K_H}$ is a Hermitian--Einstein reduction, then the connection on $E_H$ induced by the
corresponding connection $\nabla$ on $E_{K_H}$ will be called a Hermitian--Einstein connection.

If $E_H$ is polystable, then the Hermitian--Einstein connections on the restrictions of $E_H$ to 
the connected components of $E_W$ together produce a Hermitian--Einstein connection on $E_H$. 
The Hermitian--Einstein connection for the restriction of $E_H$ to each component of $E_W$ 
produces an element of $z({\mathfrak k}_h)$ (the element $c$ in \eqref{ez2}). The second 
condition in the definition of polystability for $E_H$ ensures that this element of 
$z({\mathfrak k}_h)$ is independent of the component of $E_W$; the elements of $z({\mathfrak 
k}_h)$ for different connected components of $E_W$ coincide. Furthermore, the 
Hermitian--Einstein connection on $E_H$ is unique. Conversely, if $E_H$ admits a 
Hermitian--Einstein connection, then the restriction of $E_H$ to each connected components of 
$E_W$ is polystable. Since the element of $z({\mathfrak k}_h)$ for the Hermitian--Einstein 
connection (the element $c$ in \eqref{ez1}) does not depend on the component of $E_W$, the 
second condition in the definition of polystability is satisfied for $E_H$.

The intersection $K_C\, :=\, C\bigcap K_G$ is a maximal compact subgroup of $C$. It will be
used for defining the Hermitian--Einstein equation for holomorphic principal $C$--bundles on $E_W$.

\begin{proposition}\label{prop3}
Assume that the principal $G$--bundle $E_G$ on $M$ is polystable. Let $\nabla$
be the Hermitian--Einstein connection on $E_G$. Then the following two
hold:
\begin{enumerate}

\item The principal $C$--bundle $E'_C$ on $E_W$ is polystable.

\item The Hermitian--Einstein connection $\phi^*\nabla$ on $\phi^*E_G$ preserves
the reduction $E'_C$ of structure group of $\phi^*E_G$ to $C$. Furthermore, the connection on $E'_C$
given by $\phi^*\nabla$ is Hermitian--Einstein.
\end{enumerate}
\end{proposition}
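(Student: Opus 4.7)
The strategy is to prove part (2) first and then deduce part (1) from the Kobayashi--Hitchin correspondence. Since $\phi:E_W\to M$ is a finite \'etale Galois cover, the principal bundle $\phi^*E_G$ on each connected component of $(E_W,\phi^*\omega)$ is polystable with Hermitian--Einstein connection $\phi^*\nabla$, the HE equation pulling back verbatim with the same constant $c\in z(\mathfrak{k})$. Because $c$ is central, the induced connection $D_1'$ on $\mathrm{ad}(E_G)$ satisfies $\mathcal{K}(D_1')\wedge\omega^{d-1}=0$; thus $\mathrm{ad}(E_G)$ (and its pullback) is Hermitian--Einstein of degree zero, the degree being zero because $\det\circ\mathrm{Ad}$ is trivial on any connected reductive group.

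The key step is to show that $D_1$ preserves $\mathbb{T}$, equivalently that $D_1'$ preserves the Lie algebra subbundle $\widetilde{\mathbb T}\subset\mathrm{ad}(E_G)$. The observation that unlocks this is that by \eqref{se2l} and \eqref{e10}, the pullback $\phi^*\widetilde{\mathbb T}$ is identified with the trivial subbundle $E_W\times\mathrm{Lie}(T)\subset\mathrm{ad}(\phi^*E_G)$; its generators, indexed by a basis of $\mathrm{Lie}(T)$, are global holomorphic sections of $\mathrm{ad}(\phi^*E_G)$. By the standard Weitzenb\"ock argument, every holomorphic section of a Hermitian--Einstein vector bundle of degree zero over a compact K\"ahler manifold is parallel. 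Hence these generators are parallel under $\phi^*D_1'$, so $\phi^*\widetilde{\mathbb T}$ is a parallel subbundle; \'etaleness of $\phi$ lets the parallelism descend, so $\widetilde{\mathbb T}$ is preserved by $D_1'$.

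With $\mathbb{T}$ preserved, Theorem \ref{thm1} yields a complex connection $D_2$ on $E_N$ inducing $\nabla$. Pulling back, $\phi^*D_2$ is a connection on $\phi^*E_N$; under the tautological reduction $E'_C=E_N\hookrightarrow\phi^*E_N$ of Lemma \ref{lem1}, \'etaleness of $\phi$ forces the horizontal distribution of $\phi^*D_2$ to be tangent to $E'_C$, so it restricts to a connection on $E'_C$ whose extension of structure group to $G$ recovers $\phi^*\nabla$. This shows that $\phi^*\nabla$ preserves $E'_C$. The curvature of the restricted connection is $\phi^*\mathcal{K}(\nabla)$ valued in $\mathrm{ad}(E'_C)\subset\mathrm{ad}(\phi^*E_G)$, and the pulled-back HE equation reads $\phi^*\mathcal{K}(\nabla)\wedge(\phi^*\omega)^{d-1}=c(\phi^*\omega)^d$ with $c\in z(\mathfrak{k})\subset z(\mathfrak{k}_c)$: this is exactly the HE equation for $E'_C$.

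For part (1), the Kobayashi--Hitchin correspondence gives polystability of $E'_C$ on each component of $E_W$; since the HE constant $c$ is inherited from $M$ and $W$ acts transitively on components by $\phi^*\omega$-isometries, the degree of $E'_C(\chi)$ is independent of component for every character $\chi$ of $C$, verifying the second condition for polystability over the possibly disconnected base $E_W$. The main obstacle is the parallelism step of the second paragraph; the crucial trick is to pass to the \'etale cover $E_W$, where the torus subbundle trivializes and its generators become global holomorphic sections, to which the classical degree-zero Hermitian--Einstein rigidity directly applies.
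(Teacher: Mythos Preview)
Your proof is correct and follows the same overall architecture as the paper's: show that the induced connection on $\mathrm{Ad}(E_G)$ preserves $\mathbb{T}$, invoke Theorem~\ref{thm1} to obtain a connection on $E_N$, pass to $E'_C$ via the identification $E'_C=E_N$ (a union of components of $\phi^*E_N$), verify the Hermitian--Einstein equation there with constant in $z(\mathfrak{k})\subset z(\mathfrak{k}_c)$, and conclude polystability from the Kobayashi--Hitchin correspondence.

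The one substantive difference lies in how the preservation of $\mathbb{T}$ is established. The paper argues at the group level: since $T$ lies in the center of $C$, right translation by $T$ gives a family of holomorphic automorphisms of the principal bundle $\phi^*E_G$, and uniqueness of the Hermitian--Einstein connection forces each such automorphism to preserve $\phi^*\nabla$; an automorphism preserving a connection is precisely a parallel section of $\mathrm{Ad}$, whence $\mathbb{T}'=E_W\times T$ is parallel. You instead work at the Lie algebra level: the constant sections of $\phi^*\widetilde{\mathbb T}\cong E_W\times\mathrm{Lie}(T)$ are holomorphic sections of the degree-zero Hermitian--Einstein bundle $\mathrm{ad}(\phi^*E_G)$, hence parallel by the Bochner--Weitzenb\"ock identity. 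These are two faces of the same phenomenon (the uniqueness statement itself rests on a Bochner-type argument), but the paper's route is slightly more structural while yours is more hands-on and self-contained. Your treatment of the second polystability condition via $W$-transitivity on components and equality of volumes is also correct and makes explicit a point the paper leaves implicit.
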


\begin{proof}
Recall that $T$ is contained in the center of $C$. So the natural action of $C$
on the principal $C$--bundle $E'_C$ commutes with the action of $T\, \subset\, C$ on $E'_C$. So
$T$ acts on the total space of $E'_C$ preserving its principal $C$--bundle structure.
Recall that ${\mathbb T}'$ in \eqref{tp} is identified with $E_W\times T$. The
above action of $T$ on $E'_C$ produces the identification of $E_W\times T$ with
${\mathbb T}'$. Since $\phi^*E_G$ is identified with
the principal $G$--bundle on $E_W$ obtained by extending the structure group $E'_C$ to the
group $G$, the action of $T$ on $E'_C$ produces an action of $T$ on $\phi^* E_G$. Indeed,
$\phi^*E_G$ is the quotient of $E'_C\times G$ where two elements $(y,\, z)$ and $(y',\, z')$
of $E'_C\times G$ are identified if there is a $g\, \in\, C$ such that
$y'\,=\, yg$ and $z'\,=\, g^{-1}z$. Consider the diagonal action of $T$ on $E'_C\times G$ given by the
above action of $T$ on $E'_C$ and the trivial action of $T$ on $G$. This diagonal action of $T$
on $E'_C\times G$ descends to an action of $T$ on the quotient space $\phi^*E_G$.

Consider the Hermitian--Einstein connection $\phi^*\nabla$ on $\phi^*E_G$. From the uniqueness 
of a Hermitian--Einstein connection it follows immediately that the above action of $T$ on 
$\phi^*E_G$ preserves the Hermitian--Einstein connection $\phi^*\nabla$.
Since ${\mathbb T}\,=\, {\mathbb T}'/W\,=\, (E_W\times T)/W$ (see \eqref{e10}), this implies
that the subbundle ${\mathbb T}\, \subset\, \text{Ad}(E_G)$ is preserved by the connection
on $\text{Ad}(E_G)$ induced by $\nabla$. Hence from Theorem 
\ref{thm1} if follows that the connection $\nabla$ is induced by a complex connection 
$\nabla'$ on the principal $N$--bundle $E_N$.

A connection on $E_N$ gives a connection on the 
principal $C$--bundle $E'_C$; recall that the total spaces of $E_N$ and $E'_C$ coincide. The 
connection on $E'_C$ given by the above connection $\nabla'$ on the principal $N$--bundle $E_N$ will 
be denoted by $\nabla''$. Since the connection $\nabla'$ on $E_N$ induces the connection $\nabla$
on $E_G$, from the definition of $\nabla''$ it follows that the connection $\phi^*\nabla$ on 
$\phi^*E_G$ is induced by $\nabla''$. As the connection $\phi^*\nabla$ is Hermitian--Einstein,
it follows immediately that the inducing connection $\nabla''$ is also Hermitian--Einstein.

Since $\nabla''$ is a Hermitian--Einstein connection on $E'_C$, the principal $C$--bundle
$E'_C$ is polystable.
\end{proof}

We will now prove a converse of Proposition \ref{prop3}. Consider the maximal compact subgroup 
$K_C\, :=\, C\bigcap K_G$ of $C$. Let $z({\mathfrak k}_c)$ be the center of the Lie algebra of 
$K_C$. Note that $z({\mathfrak k})\, \subset\, z({\mathfrak k}_c)$, because the connected 
component of the center of $K_G$, containing the identity element, is contained in $C$.

\begin{proposition}\label{prop4}
Assume that the principal $C$--bundle $E'_C$ over $E_W$ is polystable. Let $\nabla$
be the Hermitian--Einstein connection on $E'_C$. Assume that the element of $z({\mathfrak k}_c)$
given by the curvature of $\nabla$ (the element $c$ in \eqref{ez1}) lies in
the subspace $z({\mathfrak k})$. Then the following two hold:
\begin{enumerate}

\item The principal $G$--bundle $E_G$ on $M$ is polystable.

\item The Hermitian--Einstein connection on $E_G$ is given by $\nabla$.
\end{enumerate}
\end{proposition}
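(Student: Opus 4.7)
The strategy is to reverse the construction in Proposition \ref{prop3}: starting from $\nabla$ on $E'_C$, I will descend it to a complex connection on $E_N\to M$, extend it along $N\subset G$ to a complex connection $\widetilde{\nabla}$ on $E_G$, exhibit a $K_G$--reduction of $E_G$ whose Chern connection is $\widetilde{\nabla}$, and then invoke the Hermitian--Einstein theory to conclude polystability of $E_G$ and that its Hermitian--Einstein connection is $\widetilde{\nabla}$.

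For the descent, the crucial point is that $\nabla$ is invariant under the action $\tau$ of $N$ on $E'_C$. The subgroup $C\subset N$ already preserves $\nabla$; for $n\in N$, the map $\tau(-,n)\colon E'_C\to E'_C$ is a biholomorphic principal $C$--bundle automorphism covering the deck transformation of $E_W$ given by $n\bmod C\in W$, and this deck transformation preserves $\phi^*\omega$ since $\phi^*\omega$ is pulled back from $M$. Hence $\tau(-,n)^*\nabla$ is again a Hermitian--Einstein connection on $E'_C$, and the uniqueness of such a connection (valid on the disconnected $E_W$ thanks to the compatibility of Einstein constants built into the paper's notion of polystability) forces $\tau(-,n)^*\nabla=\nabla$. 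Because $\text{Lie}(N)=\text{Lie}(C)$, the horizontal distribution defined by $\nabla$ on $E'_C=E_N$ is already complementary to the vertical bundle for $E_N\to M$, and the $N$--equivariance just established promotes it to a complex connection $\nabla'$ on the principal $N$--bundle $E_N$; extending structure group along $N\hookrightarrow G$ yields the complex connection $\widetilde{\nabla}$ on $E_G$.

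For the curvature, by construction $\phi^*\widetilde{\nabla}$ coincides with the connection on $\phi^*E_G$ induced from $\nabla$ via $E'_C\hookrightarrow\phi^*E_G$ and $C\subset G$, so through $\text{ad}(E'_C)\hookrightarrow\phi^*\text{ad}(E_G)$ we have $\mathcal{K}(\phi^*\widetilde{\nabla})=\mathcal{K}(\nabla)$. The hypothesis $c\in z(\mathfrak{k})$ is exactly what is needed to view $c$ as a constant central section of $\text{ad}(E_G)$, not merely of $\text{ad}(E'_C)$; combined with $\phi^*\omega$ being pulled back from $\omega$ and $\phi$ being \'etale, the Einstein identity $\mathcal{K}(\nabla)\wedge(\phi^*\omega)^{d-1}=c(\phi^*\omega)^d$ descends to $\mathcal{K}(\widetilde{\nabla})\wedge\omega^{d-1}=c\,\omega^d$ on $M$.

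Finally, I would exhibit a $K_G$--reduction of $E_G$ whose Chern connection is $\widetilde{\nabla}$. Choose $K_G$ compatibly with $N$ so that $K_N:=K_G\cap N$ is a maximal compact subgroup of $N$ surjecting onto $W$; then the elements of $K_N$ normalize $K_C=K_G\cap C$, and the natural $W$--action on the (non-empty) space of Hermitian--Einstein reductions of $E'_C$ preserves this space, which is a space of sections of a bundle of non-positively curved symmetric spaces. The Cartan fixed point theorem supplies a $W$--invariant element $E_{K_C}\subset E'_C$. Extending structure group along $K_C\hookrightarrow K_G$ inside $E'_C\hookrightarrow\phi^*E_G$ gives a $W$--invariant $K_G$--reduction of $\phi^*E_G$, which descends (since $(\phi^*E_G)/W=E_G$) to a $K_G$--reduction $E_{K_G}\subset E_G$ whose Chern connection is $\widetilde{\nabla}$. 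Thus $\widetilde{\nabla}$ is Hermitian--Einstein, polystability of $E_G$ then follows from \cite{UY,Do1,RS,AB}, and uniqueness of the Hermitian--Einstein connection identifies it with $\widetilde{\nabla}$, which restricts to $\nabla$ on $E'_C$ as required. The hardest step is the production of the $W$--invariant Hermitian--Einstein reduction on $E'_C$, which is handled by the compatible choice of maximal compact subgroups together with the Cartan fixed point theorem.
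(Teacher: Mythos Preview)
Your argument is correct in outline and rests on the same key idea as the paper's proof---uniqueness of the Hermitian--Einstein connection forces the requisite equivariance, after which one descends to $M$---but the paper organizes the steps in the reverse order, which turns out to be cleaner. Rather than first establishing $N$--invariance of $\nabla$ on $E'_C=E_N$ and then extending along $N\hookrightarrow G$, the paper first extends $\nabla$ along $C\hookrightarrow G$ to a connection $\nabla'$ on $\phi^*E_G$, observes that $\nabla'$ is Hermitian--Einstein (this is where $c\in z(\mathfrak{k})$ is used), and then invokes uniqueness of the Hermitian--Einstein connection on $\phi^*E_G$ to conclude that the tautological Galois action of $W$ on $\phi^*E_G$ preserves $\nabla'$; hence $\nabla'$ descends to a Hermitian--Einstein connection on $E_G$. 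The advantage of this order is that the $W$--action on $\phi^*E_G$ is by genuine $G$--bundle automorphisms, whereas in your route the map $\tau(-,n)$ for $n\in N\setminus C$ is only $C$--equivariant after twisting by the inner automorphism $c\mapsto n^{-1}cn$; so your sentence ``$\tau(-,n)^*\nabla$ is again a Hermitian--Einstein connection on $E'_C$'' needs the compatible choice of $K_G$ (with $K_N\twoheadrightarrow W$ and $n$ taken in $K_N$) that you only introduce later.

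Your final paragraph, producing a $W$--invariant $K_G$--reduction, is more careful than the paper, which simply asserts that the descended connection satisfies the Hermitian--Einstein equation without explicitly descending the reduction. One caution: the set of Hermitian--Einstein \emph{reductions} of $E'_C$ is not the full infinite-dimensional section space of $E'_C/K_C\to E_W$ but rather a finite-dimensional orbit (essentially a torsor under a quotient of the center of $K_C$) sitting inside it, so the Cartan fixed-point theorem should be applied to that space---or, more simply, one can average over $W$ there.
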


\begin{proof}
Since $\phi^*E_G$ is the extension of structure group of $E'_C$ to $G$ using the inclusion of 
$C$ in $G$, a connection on $E'_C$ induces a connection on $\phi^*E_G$. Let $\nabla'$ be the 
connection of $\phi^*E_G$ induced by the connection $\nabla$ on $E'_C$. Since $\nabla$ satisfies 
the Hermitian--Einstein equation with the element of $z({\mathfrak k}_c)$, given by the 
curvature of $\nabla$, lying in the subspace $z({\mathfrak k})$, it follows immediately that 
$\nabla'$ satisfies the Hermitian--Einstein equation for $\phi^*E_G$.

Consider the natural action of the Galois group $\text{Gal}(\phi)\, =\, W$ on the pulled back 
bundle $\phi^*E_G$. From the uniqueness of the Hermitian--Einstein connection on $\phi^*E_G$ it 
follows immediately that this action of $W$ on $\phi^*E_G$ preserves the Hermitian--Einstein 
connection $\nabla'$. Hence $\nabla'$ descends to a connection on $E_G$; this connection on 
$E_G$ given by $\nabla'$ will be denoted by $\nabla''$. Since $\nabla'$ satisfies the 
Hermitian--Einstein equation, it follows immediately that $\nabla''$ also satisfies the 
Hermitian--Einstein equation.

Since $E_G$ admits a Hermitian--Einstein connection, namely $\nabla''$, the principal
$G$--bundle $E_G$ is polystable.
\end{proof}

\section{Higgs bundles and Levi reduction}\label{se4}

In this section we work with the set-up of Section \ref{se2.1}. Take a torus bundle
$$
{\mathbb T}\, \subset\, \text{Ad}(E_G)\, \stackrel{p}{\longrightarrow}\, M
$$
as in \eqref{e2}. Fix $T$ as in \eqref{e3}, and construct $N$ as in \eqref{e4}.
Let $E_N\, \subset\, E_G$ be the holomorphic reduction of structure group to $N$
constructed in \eqref{en}.

Let $\iota\, :\, \text{ad}(E_N)\, \longrightarrow\, \text{ad}(E_G)$ be the inclusion map.
Take a holomorphic vector bundle $V$ on $M$. Let
$$
(\iota\otimes \text{Id}_V)_*\, :\, H^0(M,\, \text{ad}(E_N)\otimes V)\, \longrightarrow\,
H^0(M,\, \text{ad}(E_G)\otimes V)
$$
be the natural homomorphism. For any $x\,\in\, M$, the adjoint action of $\text{Ad}(E_G)_x$
on $\text{ad}(E_G)_x$ and the trivial action of $\text{Ad}(E_G)_x$ on $V_x$ together produce
a diagonal action of $\text{Ad}(E_G)_x$ on $(\text{ad}(E_G)\otimes V)_x$. A section
$$
\theta\, \in\, H^0(M,\, \text{ad}(E_G)\otimes V)
$$
is said to be fixed by $\mathbb T$ if for every $x\,\in\, M$ and $g\, \in\, {\mathbb T}_x$,
the above action of $g$ on $(\text{ad}(E_G)\otimes V)_x$ fixes $\theta(x)$.

\begin{lemma}\label{lem4}
A section
$$
\theta\, \in\, H^0(M,\, {\rm ad}(E_G)\otimes V)
$$
lies in the image of the above homomorphism $(\iota\otimes {\rm Id}_V)_*$ if and
only if $\theta$ is fixed by $\mathbb T$.
\end{lemma}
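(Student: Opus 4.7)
The plan is to reduce the statement to a fibrewise claim about $\mathrm{Ad}(T)$-fixed vectors in $\mathrm{Lie}(G)$, since the question of whether $\theta$ comes from $\mathrm{ad}(E_N)\otimes V$ is pointwise: because the inclusion $\iota\colon \mathrm{ad}(E_N)\hookrightarrow \mathrm{ad}(E_G)$ is a holomorphic subbundle inclusion (the inclusion $N\subset G$ induces the subspace inclusion $\mathrm{Lie}(N)\subset\mathrm{Lie}(G)$), a section $\theta$ of $\mathrm{ad}(E_G)\otimes V$ lies in the image of $(\iota\otimes\mathrm{Id}_V)_*$ if and only if $\theta(x)\in \mathrm{ad}(E_N)_x\otimes V_x$ for every $x\in M$. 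Thus the whole lemma is equivalent to showing that at each $x\in M$,
$$
\bigl(\mathrm{ad}(E_G)_x\otimes V_x\bigr)^{\mathbb T_x}\,=\,\mathrm{ad}(E_N)_x\otimes V_x,
$$
where the superscript denotes the fixed subspace under the diagonal action (which, on the $V_x$-factor, is trivial).

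For the easy direction (image $\Rightarrow$ fixed by $\mathbb T$), I would fix $x\in M$ and a point $y\in E_N\cap p_0^{-1}(x)$. Using $y$ to identify $\mathrm{Ad}(E_G)_x$ with $G$ and $\mathrm{ad}(E_G)_x$ with $\mathrm{Lie}(G)$, the construction of $E_N$ in \eqref{en}--\eqref{e6} shows that $\mathbb T_x\subset\mathrm{Ad}(E_G)_x$ corresponds to $T\subset G$, while $\mathrm{ad}(E_N)_x\subset\mathrm{ad}(E_G)_x$ corresponds to $\mathrm{Lie}(N)\subset\mathrm{Lie}(G)$. Since $N/C$ is finite we have $\mathrm{Lie}(N)=\mathrm{Lie}(C)$, and $C$ is the centralizer of $T$, so $\mathrm{Ad}(T)$ acts trivially on $\mathrm{Lie}(N)$. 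Tensoring by the trivial action on $V_x$, we conclude that $\mathbb T_x$ fixes every element of $\mathrm{ad}(E_N)_x\otimes V_x$.

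For the converse, the main input is the standard fact for reductive groups that the fixed subspace of $\mathrm{Ad}(T)$ in $\mathrm{Lie}(G)$ is precisely $\mathrm{Lie}(C)$ (the Lie algebra of the centralizer); see for instance the description of roots relative to a torus in a reductive group. Granting this, if $\theta$ is fixed by $\mathbb T$, then after using the identification above we see $\theta(x)\in \mathrm{Lie}(G)^{T}\otimes V_x=\mathrm{Lie}(C)\otimes V_x=\mathrm{Lie}(N)\otimes V_x$, which corresponds to $\mathrm{ad}(E_N)_x\otimes V_x\subset \mathrm{ad}(E_G)_x\otimes V_x$. Hence $\theta$ takes values in the subbundle $\mathrm{ad}(E_N)\otimes V$, giving the desired preimage under $(\iota\otimes\mathrm{Id}_V)_*$.

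The only subtlety, and the one thing that needs to be checked carefully, is that the identifications $\mathrm{Ad}(E_G)_x\simeq G$ and $\mathrm{ad}(E_G)_x\simeq \mathrm{Lie}(G)$ depend on the auxiliary choice of $y\in p_0^{-1}(x)$, but the subspaces $\mathbb T_x$, $\mathrm{ad}(E_N)_x$, and hence the $\mathbb T_x$-fixed subspace of $\mathrm{ad}(E_G)_x\otimes V_x$, are intrinsic. Since for $y\in E_N$ the identification sends $\mathbb T_x$ to $T$ and $\mathrm{ad}(E_N)_x$ to $\mathrm{Lie}(N)$ simultaneously, the fibrewise equality above is independent of choices, and no further verification is needed. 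There is no serious obstacle in this proof; the whole content is the identity $\mathrm{Lie}(G)^T=\mathrm{Lie}(C)=\mathrm{Lie}(N)$ together with the observation that membership in a subbundle can be tested pointwise.
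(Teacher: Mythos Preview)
Your proposal is correct and follows exactly the paper's approach: the paper's entire proof is the single observation that $(\mathrm{ad}(E_G)_x)^{\mathbb T_x}=\mathrm{ad}(E_N)_x$ for every $x\in M$, from which the lemma follows. You have simply spelled out why this fiberwise identity holds (via the identification sending $\mathbb T_x$ to $T$ and $\mathrm{ad}(E_N)_x$ to $\mathrm{Lie}(N)=\mathrm{Lie}(C)=\mathrm{Lie}(G)^T$) and why it implies the global statement, so your argument is a fleshed-out version of the same proof.
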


\begin{proof}
For any $x\, \in\, M$, the fixed point locus $(\text{ad}(E_G)_x)^{{\mathbb T}_x}$ for the
adjoint action of ${\mathbb T}_x$ on $\text{ad}(E_G)_x$ is $\text{ad}(E_N)_x$. The lemma
follows from this fact.
\end{proof}

\subsection{$G$-Higgs bundles}

In this subsection we set $V\,=\, \Omega^1_M$. The Lie algebra
structure of fibers of $\text{ad}(E_G)$ and the natural projection $\bigotimes^2 \Omega^1_M
\, \longrightarrow\, \Omega^2_M$ together define a homomorphism
$$
(\text{ad}(E_G)\otimes\Omega^1_M)\otimes (\text{ad}(E_G)\otimes\Omega^1_M) \,
\longrightarrow\, \text{ad}(E_G)\otimes\Omega^2_M
$$
which we shall denote by $\bigwedge$.

We recall that a Higgs field on $E_G$ is a holomorphic section
$$
\theta\, \in\, H^0(M,\,\text{ad}(E_G)\otimes\Omega^1_M)
$$
such that $\theta\bigwedge\theta\,=\, 0$ \cite{Si1}, \cite{Si2}, \cite{Hi}. A $G$--Higgs
bundle on $M$ is a pair of the form $(E_G,\, \theta)$, where $E_G$ is a holomorphic
principal $G$--bundle over $M$ and $\theta$ is a Higgs field on $E_G$.

As in Section \ref{sehe}, we assume that $M$ is K\"ahler and it is equipped with a K\"ahler
form $\omega$.

A $G$--Higgs bundle $(E_G,\, \theta)$ is called \textit{stable}
(respectively, \textit{semistable}) if for all triples of the form $(P,\, E_P,\, \chi)$, where
\begin{itemize}
\item{} $P\, \subset\, G$ is proper (not necessarily maximal) parabolic
subgroup,

\item $E_P\, \subset\, E_G$ is a holomorphic reduction of structure group of $E_G$ to
$P$ over a big open subset $U\, \subset\, M$ such that
$$
\theta\vert_U\, \in\, H^0(U,\,\text{ad}(E_P)\otimes\Omega^1_U)\, \subset\,
H^0(U,\,\text{ad}(E_G)\otimes\Omega^1_U)\, ,
$$
and

\item $\chi$ is a strictly anti-dominant character of $P$
which is trivial on the center of $G$,
\end{itemize}
the following holds:
$$
\text{degree}(E_P(\chi))\, > \, 0
$$
(respectively, $\text{degree}(E_P(\chi))\, \geq \, 0$), where $E_P(\chi)$ is the
holomorphic line bundle over $U$ associated to the principal $P$--bundle $E_P$ for the
character $\chi$. (See \cite{Si2}, \cite{BS}.)

A $G$--Higgs bundle $(E_G,\, \theta)$ over $M$ is called \textit{polystable} if either $(E_G,\,theta)$
is stable or there is parabolic subgroup $P\, \subset\, G$ and a holomorphic reduction of
structure group $E_{L(P)}\, \subset\, E_G$ over $M$ to a Levi factor $L(P)$ of $P$, such that
\begin{itemize}
\item $\theta\, in\, H^0(M,\,\text{ad}(E_{L(P)})\otimes\Omega^1_M)$,

\item the $L(P)$--Higgs bundle $(E_{L(P)}, \,\theta)$ is stable, and

\item the reduction of structure group of $E_G$ to $P$ given by the extension of the structure
group of $E_{L(P)}$ to $P$, using the inclusion of $L(P)$ in $P$, is admissible.
\end{itemize}

Fix a maximal compact subgroup $K_G$ as in \eqref{kg}. Let $(E_G,\, \theta)$ be a $G$--Higgs
bundle on $M$. A $C^\infty$ reduction 
of structure group over $M$
$$E_{K_G}\, \subset\, E_G$$
is called a Hermitian--Yang--Mills reduction if the corresponding connection $\nabla$ on $E_{K_G}$
has the property that the curvature ${\mathcal K}(\nabla)$ of $\nabla$ satisfies the equation
\begin{equation}\label{ez2b}
({\mathcal K}(\nabla)+ [\theta,\, \theta^*])\wedge \omega^{d-1} \,=\, c \omega^d\, ,
\end{equation}
where $c$ and $d$ are as in \eqref{ez2}, and $\theta^*$ is the adjoint of $\theta$
(the $\mathbb R$--vector space $\text{Lie}(G)$ has the decomposition
$\text{Lie}(G)\,=\, \text{Lie}(K_G)\bigoplus \mathfrak p$; the adjoint is the conjugate
linear automorphism of the vector space $\text{Lie}(G)$ that acts on $\text{Lie}(K_G)$ as
multiplication by $-1$ and acts on $\mathfrak p$ as the identity map). If $E_{K_G}$ is
a Hermitian--Yang--Mills reduction, then the connection on $E_G$ induced by the corresponding
connection $\nabla$ on $E_{K_G}$ is called a Hermitian--Yang--Mills connection \cite{Si1},
\cite{Si2}, \cite{BS}.

A $G$--Higgs bundle $(E_G,\, \theta)$ admits a Hermitian--Yang--Mills connection if 
and only if $(E_G,\, \theta)$ is polystable, and furthermore, if $(E_G,\, \theta)$ is polystable,
then it has a unique Hermitian--Yang--Mills connection \cite{Si1}, \cite{Hi}, \cite{BS}.

Let $T$, $C$, $N$ and $W$ be as in Theorem \ref{thmse1}.
Let $(E_G,\,\theta)$ be a $G$--Higgs bundle on $M$. Let $\mathbb T$
be a torus subbundles of ${\rm Ad}(E_G)$ such that
\begin{itemize}
\item for some (hence every) $x\, \in\, M$, the
fiber ${\mathbb T}_x$ lies in the conjugacy class of tori in ${\rm Ad}(E_G)$ determined by $T$, and

\item for every $x\, \in\, M$, the action of ${\mathbb T}_x$ on 
$(\text{ad}(E_G)\otimes\Omega^1_M)_x$ fixes the element $\theta(x)$.
\end{itemize}
Let $(E_W,\, \phi,\, E'_C,\, \tau)$ be the corresponding quadruple in Theorem \ref{thmse1}.
Equip $E_W$ with the K\"ahler form $\phi^*\omega$.

The notions of Higgs bundle, polystability and Hermitian--Yang--Mills connection extend to
$E_W$ as before. 

We have the following generalization of Proposition \ref{prop3}:

\begin{proposition}\label{prop3b}
Assume that the $G$--Higgs bundle $(E_G,\, \theta)$ is polystable. Let $\nabla$
be the Hermitian--Yang--Mills connection on $E_G$. Then the following three
hold:
\begin{enumerate}
\item $\theta$ defines a Higgs on the principal $C$--bundle $E'_C$ over $E_W$.

\item The $C$--Higgs bundle $(E'_C,\,\theta)$ is polystable.

\item The Hermitian--Yang--Mills connection $\phi^*\nabla$ on $\phi^*E_G$ preserves
the reduction $E'_C$ of structure group of $\phi^*E_G$ to the subgroup $C$. Furthermore, the
connection on $E'_C$ given by $\phi^*\nabla$ is Hermitian--Yang--Mills connection for
the $C$--Higgs bundle $(E'_C,\,\theta)$.
\end{enumerate}
\end{proposition}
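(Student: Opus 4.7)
The plan is to adapt the proof of Proposition \ref{prop3} to the Higgs setting, using Lemma \ref{lem4} to track the behavior of $\theta$ under the reduction of structure group and replacing the Hermitian--Einstein uniqueness argument by its Hermitian--Yang--Mills counterpart.

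Part (1) is the place to start. By hypothesis, the fiber $\mathbb{T}_x$ fixes $\theta(x) \in (\text{ad}(E_G) \otimes \Omega^1_M)_x$ for every $x \in M$, so Lemma \ref{lem4} with $V = \Omega^1_M$ gives $\theta \in H^0(M,\, \text{ad}(E_N) \otimes \Omega^1_M)$. Because $N/C$ is finite, $\text{Lie}(N) = \text{Lie}(C)$, so, as already used in Section \ref{se2.2}, the bundle $\text{ad}(E'_C)$ coincides with $\phi^*\text{ad}(E_N)$ as a subbundle of $\phi^*\text{ad}(E_G)$. Pulling $\theta$ back along $\phi$ therefore produces a holomorphic section $\phi^*\theta \in H^0(E_W,\, \text{ad}(E'_C) \otimes \Omega^1_{E_W})$, and the integrability condition $\theta \wedge \theta = 0$ pulls back to $\phi^*\theta \wedge \phi^*\theta = 0$; this establishes (1).

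For (2) and (3) I would run the $T$-action argument of Proposition \ref{prop3}. Since $T \subset Z(C)$, the right $T$-action on the principal $C$-bundle $E'_C$ commutes with the $C$-action and extends canonically to $\phi^*E_G = E'_C \times^C G$. Because $T$ acts on $E'_C$ by principal $C$-bundle automorphisms and its adjoint action on $\text{Lie}(C)$ is trivial, the induced action on $\text{ad}(E'_C)$ is trivial, so $\phi^*\theta$ is $T$-invariant. Since $\phi$ is \'etale and $\phi^*\omega$ is K\"ahler, $\phi^*\nabla$ is the unique Hermitian--Yang--Mills connection for the $G$-Higgs bundle $(\phi^*E_G,\, \phi^*\theta)$, so uniqueness forces the $T$-action to preserve $\phi^*\nabla$. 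Descending by $W$ as in Proposition \ref{prop3} and using $\mathbb{T} = \mathbb{T}'/W = (E_W \times T)/W$ (see \eqref{e10}), this means the connection on $\text{Ad}(E_G)$ induced by $\nabla$ preserves $\mathbb{T}$. Theorem \ref{thm1} then yields a complex connection $\nabla'$ on $E_N$ inducing $\nabla$; restricting $\nabla'$ to $E'_C$ (whose total space is $E_N$) gives a connection $\nabla''$ on $E'_C$ such that $\phi^*\nabla$ is induced from $\nabla''$ along $C \hookrightarrow G$.

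To finish (3), I would observe that every term of the Hermitian--Yang--Mills equation \eqref{ez2b} for $\phi^*\nabla$ already lives in $\text{ad}(E'_C) \otimes \Omega^{\bullet}_{E_W}$: the curvature $\mathcal{K}(\phi^*\nabla)$ is the image of $\mathcal{K}(\nabla'')$ under the inclusion $\text{ad}(E'_C) \hookrightarrow \text{ad}(\phi^*E_G)$, the Higgs field $\phi^*\theta$ sits in $\text{ad}(E'_C) \otimes \Omega^1_{E_W}$ by part (1), and its adjoint $\phi^*\theta^{\ast}$, computed with respect to the maximal compact reduction underlying $\nabla$, does too (since that $K_G$-reduction restricts to a $K_C$-reduction of $E'_C$). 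The constant $c \in z(\mathfrak{k})$ lies in the subspace $z(\mathfrak{k}_c)$, so the equation restricts verbatim to $(\mathcal{K}(\nabla'') + [\phi^*\theta,\, \phi^*\theta^{\ast}]) \wedge (\phi^*\omega)^{d-1} = c\,(\phi^*\omega)^d$, making $\nabla''$ Hermitian--Yang--Mills on $(E'_C,\, \phi^*\theta)$. The Hitchin--Simpson correspondence for $C$-Higgs bundles then gives polystability of $(E'_C,\, \theta)$, proving (2). The main obstacle I anticipate is the verification that the $K_G$-reduction defining $\nabla$ really does restrict to a $K_C$-reduction of $E'_C$, so that $\phi^*\theta^{\ast}$ takes values in $\text{ad}(E'_C)$; this should follow from the $T$-invariance of the HYM reduction together with $W$-equivariance, mirroring the way $\nabla$ is reduced to $E_N$ through Theorem \ref{thm1}.
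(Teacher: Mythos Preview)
Your proposal is correct and follows precisely the route the paper indicates: the paper's own proof simply says that, in view of Lemma~\ref{lem4} and the uniqueness of the Hermitian--Yang--Mills connection on a polystable bundle, one runs the argument of Proposition~\ref{prop3} verbatim and omits the details. You have supplied those details faithfully, including the $T$-action argument, the appeal to Theorem~\ref{thm1}, and the verification that each term of \eqref{ez2b} lands in $\text{ad}(E'_C)$; the residual concern you flag about the $K_G$-reduction restricting to a $K_C$-reduction is indeed handled by the $T$-invariance and $W$-equivariance as you suggest.
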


\begin{proof}
In view of Lemma \ref{lem4}, and the uniqueness of the Hermitian--Yang--Mills connection on a
polystable bundle, the proof works along the same line as the proof of Proposition
\ref{prop3}. We omit the details.
\end{proof}

The following is a generalization of Proposition \ref{prop4}:

\begin{proposition}\label{prop4b}
Assume that the $C$--Higgs bundle $(E'_C,\, \theta)$ is polystable. Let $\nabla$
be the Hermitian--Yang--Mills connection on $E'_C$. Assume that the element of $z({\mathfrak k}_c)$
given by the Hermitian--Yang--Mills equation for $(E'_C,\, \theta)$ lies in
$z({\mathfrak k})$. Then the following two
hold:
\begin{enumerate}

\item The $G$--Higgs bundle $(E_G,\,\theta)$ on $M$ is polystable.

\item The Hermitian--Yang--Mills connection on $E_G$ is given by $\nabla$.
\end{enumerate}
\end{proposition}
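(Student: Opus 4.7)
The plan is to adapt the proof of Proposition~\ref{prop4} to the Higgs-bundle setting, with the existence--uniqueness theorem for Hermitian--Yang--Mills connections on polystable $G$-Higgs bundles (\cite{Si1}, \cite{Hi}, \cite{BS}) playing the role that the Donaldson--Uhlenbeck--Yau--Ramanathan--Subramanian--Anchouche--Biswas theorem played there. Concretely, starting from the Hermitian--Yang--Mills connection $\nabla$ on $(E'_C,\, \theta)$ over $E_W$ whose central element $c$ lies in $z(\mathfrak{k}) \subset z(\mathfrak{k}_c)$, I will produce a Hermitian--Yang--Mills connection on $(E_G,\, \theta)$ over $M$ by first extending to $\phi^*E_G$ and then descending via $W$-invariance.

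First, I would note that $\phi^*E_G \,=\, E'_C \times^C G$, so the Higgs field $\theta$ on $E'_C$ (which, by Lemma~\ref{lem4}, corresponds to the original $\mathbb{T}$-fixed section in $H^0(M, \text{ad}(E_G) \otimes \Omega^1_M)$) extends to the Higgs field $\phi^*\theta$ on $\phi^*E_G$ via the inclusion $\text{ad}(E'_C) \hookrightarrow \text{ad}(\phi^*E_G)$, and $\nabla$ induces a complex connection $\nabla'$ on $\phi^*E_G$ as in Section~\ref{se3.1}. The key computation is that $\nabla'$ is Hermitian--Yang--Mills for $(\phi^*E_G,\, \phi^*\theta)$: a $K_C$-reduction of $E'_C$ producing $\nabla$ extends (via $K_C \,=\, K_G \cap C$) to a $K_G$-reduction of $\phi^*E_G$ producing $\nabla'$, the curvature $\mathcal{K}(\nabla')$ is identified with $\mathcal{K}(\nabla)$ via the inclusion of adjoint bundles, and the adjoint $\theta^*$ is the same on both sides. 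Then
\begin{equation*}
(\mathcal{K}(\nabla) + [\theta,\, \theta^*]) \wedge (\phi^*\omega)^{d-1} \,=\, c (\phi^*\omega)^d
\end{equation*}
lifts to the Hermitian--Yang--Mills equation for $(\phi^*E_G,\, \phi^*\theta)$ with the same central element, precisely because $c \in z(\mathfrak{k})$.

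Second, by the Simpson--Hitchin--Biswas--Schumacher correspondence applied on each connected component of $E_W$, the Hermitian--Yang--Mills connection on a polystable $G$-Higgs bundle is unique. The natural action of the Galois group $W \,=\, \text{Gal}(\phi)$ on $\phi^*E_G$ preserves $\phi^*\theta$ and $\phi^*\omega$, so by uniqueness it preserves $\nabla'$. Hence $\nabla'$ descends to a complex connection $\nabla''$ on $E_G$, which still satisfies the Hermitian--Yang--Mills equation for $(E_G,\, \theta)$ on $(M,\, \omega)$ with the same central element $c \in z(\mathfrak{k})$. Invoking the same correspondence on $M$ then yields both conclusions at once: $(E_G,\, \theta)$ is polystable, and its unique Hermitian--Yang--Mills connection is exactly $\nabla''$, which is induced by $\nabla$.

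The main obstacle, as in Proposition~\ref{prop3b}, is the bookkeeping in the first step: one must verify the precise compatibility between the Hermitian reductions, curvatures, the terms $[\theta,\, \theta^*]$, and the central elements on the two sides of the extension $E'_C \subset \phi^*E_G$. The hypothesis $c \in z(\mathfrak{k})$ is exactly the ingredient needed to upgrade the equation from $E'_C$ to $\phi^*E_G$; everything else is either standard compatibility of extension of structure group with connections and curvature, or an application of the existence--uniqueness half of the $G$-Higgs bundle correspondence as a black box.
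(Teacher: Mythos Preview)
Your proposal is correct and follows essentially the same route as the paper, which simply states that the proof is similar to that of Proposition~\ref{prop4}. You have in fact supplied more detail than the paper does: the two-step scheme of extending $\nabla$ to a Hermitian--Yang--Mills connection $\nabla'$ on $\phi^*E_G$ (using $c\in z(\mathfrak{k})$), then descending via uniqueness under the $W$-action, is precisely the argument of Proposition~\ref{prop4} transported to the Higgs setting.
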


\begin{proof}
The proof is similar to the proof of Proposition \ref{prop4}.
\end{proof}

\section{Torus for equivariant bundles}\label{seeq}

Let $\Gamma$ be a group and
$$
\varphi_0\, :\, \Gamma\times M\, \longrightarrow\, M
$$
a left--action such that the self--map $x\, \longmapsto\, \varphi_0(\gamma, x)$ of $M$ is
a biholomorphism for every $\gamma\, \in\, \Gamma$. An \textit{equivariant} holomorphic principal
$G$--bundle on $M$ is a pair $(E_G,\, \varphi)$, where $p_0\, :\, E_G\,\longrightarrow\, M$ is a
holomorphic principal $G$--bundle and
$$
\varphi\, :\, \Gamma\times E_G\, \longrightarrow\, E_G
$$
is an action of $\Gamma$ on $E_G$ such that
\begin{itemize}
\item for every $\gamma\, \in\, \Gamma$, the self--map $y\, \longmapsto(\gamma, y)$ of
$E_G$ is a biholomorphism,

\item $p_0\circ\varphi\,=\, \varphi_0\circ (\text{Id}_\Gamma\times p_0)$, and

\item the actions $G$ and $\Gamma$ on $E_G$ commute.
\end{itemize}

Let $(E_G,\, \varphi)$ be an equivariant holomorphic principal $G$--bundle on $M$. For any 
complex manifold $F$ equipped with a holomorphic action of $G$, consider the diagonal action of 
$\Gamma$ on $E_G\times F$ given by the action $\varphi$ on $E_G$ and the trivial action of 
$\Gamma$ on $F$. This diagonal action of $\Gamma$ on $E_G\times F$ produces an action of 
$\Gamma$ on the quotient space of $E_G\times F$ defining the fiber bundle over $M$ associated to 
$E_G$ for $F$. In particular, $\Gamma$ acts on $\text{Ad}(E_G)$; this action of $\Gamma$ on 
$\text{Ad}(E_G)$ preserves the group structure of the fibers of $\text{Ad}(E_G)$.

As in \eqref{e2}, let
\begin{equation}\label{e2b}
{\mathbb T}\, \subset\, \text{Ad}(E_G)\, \stackrel{p}{\longrightarrow}\, M
\end{equation}
be a holomorphic sub-fiber bundle such that
\begin{itemize}
\item the above action of $\Gamma$ on $\text{Ad}(E_G)$ preserves $\mathbb T$,

\item $M\times Z_0(G)\, \subset\, {\mathbb T}$, and

\item for every point $x\, \in\, M$, the
fiber
$$
{\mathbb T}_x\,:=\, {\mathbb T}\cap \text{Ad}(E_G)_x\,\subset\, \text{Ad}(E_G)_x
$$
is a torus (it need not be a maximal torus of $\text{Ad}(E_G)_x$).
\end{itemize}
Fix $T$ as in \eqref{e3}. Then the principal $N$--bundle $E_N\, \subset\, E_G$ 
in \eqref{en} is evidently preserved by the action $\varphi$ of $\Gamma$ on $E_G$.
The resulting action of $\Gamma$ on $E_N$ produces an action of $\Gamma$ on the
quotient $E_W$ of $E_N$ in \eqref{e7}. The projection $\phi$ in \eqref{e7} clearly intertwines
the action of $\Gamma$ on $E_W$ and $M$.

Consider the diagonal action $\Gamma$ on $E_W\times E_G$ constructed using the actions of $\Gamma$ on
$E_W$ and $E_G$. Recall the $\phi^*E_G$ is the submanifold of $E_W\times E_G$ consisting of
all $(x,\, y)\, \in\, E_W\times E_G$ such that $\phi(x)\,=\, p_0(y)$, where $p_0$ and $\phi$ are
the maps in \eqref{p0} and \eqref{e7} respectively. This submanifold of $E_W\times E_G$ is
preserved by the diagonal action of $\Gamma$. Therefore, this action of $\Gamma$ on $E_W\times E_G$
produces an action of $\Gamma$ on $\phi^*E_G$. For this action of $\Gamma$ on $\phi^*E_G$, the
projection $\phi^*E_G\, \longrightarrow\, E_W$ is clearly $\Gamma$--equivariant. Also, the
natural map $\phi^*E_G\, \longrightarrow\, E_G$ is also $\Gamma$--equivariant. The
diagonal action of $\Gamma$ on $E_W\times E_N$ similarly produces an action of $\Gamma$ on
$\phi^*E_N$, where $E_N$ is constructed in \eqref{en}. Note that the inclusion map $\phi^*E_N\,
\hookrightarrow\, \phi^*E_G$ (which is the pullback of the inclusion map in \eqref{en}) is
$\Gamma$--equivariant. Since the principal $N$--bundle $E_N\, \subset\, E_G$
is preserved by the action $\varphi$ of $\Gamma$ on $E_G$, the principal $C$--bundle
$E_C\, \subset\, \phi^*E_N\, \subset\, \phi^*E_G$ constructed in Lemma \ref{lem1} is preserved
by the above action of $\Gamma$ on $\phi^*E_G$.

Conversely, fix a torus $T\, \subset\, G$ a 
containing $Z_0(G)$. The normalizer (respectively, centralizer) of $T$ in $G$ will be denoted by
$N$ (respectively, $C$), while the Weyl group $N/C$ will be denoted by $W$.
Let $$\phi\, :\, E_W\, \longrightarrow\, M$$
be a principal $W$--bundle such that $E_W$ is equipped with an action of $\Gamma$ satisfying the
following conditions:
\begin{itemize}
\item the map $\phi$ is $\Gamma$--equivariant, and

\item the actions of $\Gamma$ and $W$ on $E_W$ commute.
\end{itemize}

Let $(E_G,\, \varphi)$ be an equivariant holomorphic principal $G$--bundle on $M$.
As before, the diagonal action of $\Gamma$ on $E_W\times E_G$ produces an action of $\Gamma$
on $\phi^*E_G$. Let
$$
E'_C\, \subset\, \phi^* E_G
$$
be a holomorphic reduction of structure group of the principal
$G$--bundle $\phi^*E_G$ to $C$ such that the action of $\Gamma$
on $\phi^*E_G$ preserves $E'_C$. Assume that we are further
given a holomorphic action of the complex Lie group $N$ on $E'_C$
$$
\tau\, :\, E'_C\times N\, \longrightarrow\, E'_C
$$
such that
\begin{itemize}
\item the restriction of the map $\tau$ to $E'_C\times C$ is the natural action of 
$C$ on the principal $C$--bundle $E'_C$,

\item the actions of $N$ and $\Gamma$ on $E'_C$ commute, and

\item the diagram of maps
$$
\begin{matrix}
E'_C\times N & \stackrel{\tau}{\longrightarrow} & E'_C\\
\Big\downarrow && ~\Big\downarrow\psi \\
E_W\times W := (E'_C/C)\times (N/C)& \longrightarrow & E_W
\end{matrix}
$$
is commutative, where $E'_C\times N\, \longrightarrow\, (E'_C/C)\times (N/C)\,=\,
(E'_C\times N)/(C\times C)$ is the quotient map, and $\psi$ is the natural
projection of the principal $C$--bundle.
\end{itemize}
Then the action of $\Gamma$ on $\text{Ad}(E_G)$, given by the action $\varphi$ of $\Gamma$ on
$E_G$, preserves the torus sub-bundle ${\mathbb T}\, \subset\, \text{Ad}(E_G)$
constructed in \eqref{e10}.

Therefore, Theorem \ref{thmse1} has the following generalization:

\begin{theorem}\label{thm3}
Let $(E_G,\, \varphi)$ be an equivariant holomorphic principal $G$--bundle on $M$
and $T\, \subset\, G$ a torus
containing $Z_0(G)$. The normalizer (respectively, centralizer) of $T$ in $G$ will be denoted by
$N$ (respectively, $C$), while the Weyl group $N/C$ will be denoted by $W$.
There is a natural bijective correspondence between the following two:
\begin{enumerate}
\item $\Gamma$--invariant torus subbundles $\mathbb T$ of ${\rm Ad}(E_G)$ such that for some (hence
any) $x\, \in\, M$, the fiber ${\mathbb T}_x$ lies in the conjugacy class determined by $T$.

\item Quadruples of the form $(E_W,\, \phi,\, E'_C,\, \tau)$, where $\phi\, :\, E_W\,
\longrightarrow\, M$ is a $\Gamma$--equivariant principal $W$--bundle,
$E'_C\, \subset\, \phi^*E_G$ is a $\Gamma$--invariant holomorphic
reduction of structure group of $\phi^*E_G$ to $C$, and
$$
\tau\,:\, E'_C\times N \, \longrightarrow\, E'_C
$$
is a holomorphic action of $N$ on $E'_C$ extending the natural action of $C$ on the
principal $C$--bundle $E'_C$,
such that the actions of $N$ and $\Gamma$ on $E'_C$ commute, and the diagram of maps
$$
\begin{matrix}
E'_C\times N & \stackrel{\tau}{\longrightarrow} & E'_C\\
\Big\downarrow && ~\Big\downarrow \\
E_W\times W := (E'_C/C)\times (N/C)& \longrightarrow & E_W
\end{matrix}
$$
is commutative.
\end{enumerate}
\end{theorem}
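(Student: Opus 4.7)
The strategy is to upgrade the bijection of Theorem \ref{thmse1} to the equivariant setting by checking, at each stage of the two constructions in Sections \ref{se2.1} and \ref{se2.2}, that every object and every map is $\Gamma$-equivariant in the appropriate sense. Since Theorem \ref{thmse1} already provides the bijection together with the fact that the two constructions are mutually inverse, no new geometric input is needed: the proof reduces to a careful naturality check.

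First I would handle the forward direction. Starting from a $\Gamma$-invariant torus subbundle $\mathbb T\,\subset\,\text{Ad}(E_G)$ of the prescribed conjugacy class, I would revisit the definition of $E_N$ in \eqref{en}-\eqref{e6}. The defining condition $E_N\times T\,\subset\, f^{-1}_0(\mathbb T)$ is formulated entirely in terms of $\mathbb T$, the map $f_0$, and the torus $T$. Because $\Gamma$ acts on $E_G\times G$ trivially on the second factor (see the construction at the start of Section \ref{seeq}), the map $f_0$ is $\Gamma$-equivariant, and the hypothesis that $\varphi$ preserves $\mathbb T$ then forces $E_N\,\subset\, E_G$ to be $\Gamma$-stable. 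Since the $\Gamma$-action on $E_G$ commutes with the $G$-action, in particular it commutes with the $N$-action, so $E_N$ becomes a $\Gamma$-equivariant principal $N$-bundle. The quotient $E_W\,=\, E_N/C$ inherits a $\Gamma$-action commuting with $W\,=\, N/C$, and the projection $\phi$ of \eqref{e7} is $\Gamma$-equivariant by construction. Taking $E'_C\,=\, E_N$ as in Lemma \ref{lem1}, the resulting reduction $E'_C\,\subset\,\phi^*E_G$ is $\Gamma$-invariant inside the $\Gamma$-space $\phi^*E_G$ described in Section \ref{seeq}, and the action $\tau$ of $N$ on $E'_C$ commutes with $\Gamma$ because the $\Gamma$-action on $E_N$ commutes with the full $N$-action.

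Next I would handle the reverse direction. Given an equivariant quadruple $(E_W,\,\phi,\, E'_C,\,\tau)$ as in (2), I would rerun the construction of Section \ref{se2.2}. The principal $N$-bundle $E_N$ built from $\tau$ and $\phi\circ\psi$ carries a canonical $\Gamma$-action, because it is obtained as the same manifold $E'_C$ equipped with its $\Gamma$-action, and the commuting hypotheses guarantee compatibility with the $N$-action and the projection to $M$. Lemma \ref{lem3} identifies the extended bundle with $E_G$, and this identification is $\Gamma$-equivariant because both the inclusion $E_N\,=\, E'_C\hookrightarrow \phi^*E_G$ and the natural map $\phi^*E_G\longrightarrow E_G$ are $\Gamma$-equivariant (as already noted in Section \ref{seeq}). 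Then the subbundle $\mathbb T'\,\subset\,\text{Ad}(E'_C)$ of centers is preserved by the induced $\Gamma$-action on $\text{Ad}(E'_C)$, since that action preserves the group structure of fibers; and the descent to $\mathbb T\,\subset\,\text{Ad}(E_G)$ in \eqref{e10} is manifestly $\Gamma$-equivariant. Hence the torus subbundle obtained is $\Gamma$-invariant.

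Finally, the mutual inverse property of the two constructions was already established in the non-equivariant setting at the end of Section \ref{se2.2}. Since all the identifications there are natural and the $\Gamma$-actions throughout were carried along canonically, the same identifications remain $\Gamma$-equivariant, hence define a bijection between the two equivariant classes. The main potential obstacle is purely bookkeeping: one must keep straight four commuting actions (of $\Gamma$, $G$, $N$, $C$, $W$) on several bundles ($E_G$, $E_N$, $E_W$, $\phi^*E_G$, $E'_C$), and verify the commutation at each quotient. Once the basic commutation of the $\Gamma$-action with the $G$-action on $E_G$ is exploited, each further step is formal; no new analytical or geometric argument beyond Theorem \ref{thmse1} is required.
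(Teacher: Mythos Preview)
Your proposal is correct and follows essentially the same approach as the paper: the paper does not give a formal proof after the statement of Theorem \ref{thm3}, but the discussion in Section \ref{seeq} preceding it carries out exactly the equivariance checks you describe---showing that $E_N$, $E_W$, $\phi^*E_G$, and $E'_C$ are $\Gamma$-stable in the forward direction, and that the torus subbundle $\mathbb T$ of \eqref{e10} is $\Gamma$-invariant in the reverse direction---and then concludes that Theorem \ref{thmse1} admits the stated equivariant generalization.
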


All the results in section \ref{se3} and Section \ref{se4} also extend to the
equivariant set-up.



\begin{thebibliography}{AAAA}

\bibitem[AB]{AB} B. Anchouche and I. Biswas, Einstein--Hermitian connections on
polystable principal bundles over a compact K\"ahler manifold, {\it Amer. Jour.
Math.} \textbf{123} (2001), 207--228.

\bibitem[At]{At} M. F. Atiyah, Complex analytic connections in fibre
bundles, \textit{Trans. Amer. Math. Soc.} \textbf{85} (1957), 181--207.

\bibitem[BCW]{BCW} I. Biswas, C. Gangopadhyay and M. L. Wong, Direct images of vector
bundles and connections, {\it Beit. Alg. Geom.} \textbf{60} (2019), 137--156.

\bibitem[BM]{BM} I. Biswas and F.-X. Machu, On the direct images of
parabolic vector bundles and parabolic connections, {\it Jour. Geom. Phys.}
\textbf{135} (2019), 219--234.

\bibitem[BS]{BS} I. Biswas and G. Schumacher, Yang--Mills equation for
stable Higgs sheaves, {\it Inter. Jour. Math.} {\bf 20} (2009), 541--556.

\bibitem[Bo]{Bo} A. Borel, {\it Linear Algebraic Groups}, Second Edition,
Graduate Texts in Mathematics, 126. Springer-Verlag, New York, 1991.

\bibitem[De]{De} P. Deligne, {\it \'Equations diff\'erentielles \`a points
singuliers r\'eguliers}, Lecture Notes in Mathematics, Vol. 163, Springer-Verlag,
Berlin-New York, 1970.

\bibitem[DP]{DP} A. Deopurkar and A. Patel, Vector bundles and finite covers,
arXiv:1608.01711 [math.AG].

\bibitem[Do1]{Do1} S. K. Donaldson, Anti self--dual
Yang--Mills connections over complex algebraic
surfaces and stable vector bundles,
{\it Proc. London Math. Soc.} \textbf{50} (1985), 1--26.

\bibitem[Do2]{Do2} S. K. Donaldson, Infinite determinants,
stable bundles and curvature, {\it Duke Math. Jour.}
\textbf{54} (1987), 231--247.

\bibitem[Hi]{Hi} N. J. Hitchin, The self--duality equations on a Riemann surface,
\textit{Proc. London Math. Soc.} \textbf{55} (1987), 59--126.

\bibitem[Hu]{Hu} J. E. Humphreys, {\it Linear Algebraic Groups}, Graduate Texts in Mathematics,
No. 21, Springer-Verlag, New York-Heidelberg, 1975.

\bibitem[Ko]{Ko} S. Kobayashi, \textit{Differential geometry
of complex vector bundles}, Princeton University Press, Princeton,
NJ, Iwanami Shoten, Tokyo, 1987.

\bibitem[Ra]{Ra} A. Ramanathan, Stable principal bundles on a compact Riemann
surface, {\it Math. Ann.} {\bf 213} (1975), 129--152.

\bibitem[RS]{RS} A. Ramanathan and Subramanian, Einstein-Hermitian
connections on principal bundles and stability, {\it Jour. Reine Angew. Math.} {\bf 390}
(1988), 21--31.

\bibitem[Si1]{Si1} C.~T. Simpson, Constructing variations of {H}odge structure using
{Y}ang--{M}ills theory and applications to uniformization, {\it Jour. Amer. Math. Soc.}
\textbf{1} (1988), 867--918.

\bibitem[Si2]{Si2} C. T. Simpson, Higgs bundles and local systems,
\textit{Inst. Hautes \'Etudes Sci. Publ. Math.} \textbf{75} (1992),
5--95.

\bibitem[Sp]{Sp} T. A. Springer, \textit{Linear algebraic groups}, Birkh\"auser, 1981.

\bibitem[SpSt]{SpSt} T. A. Springer and R. Steinberg, Conjugacy classes,
in: {\it Seminar on Algebraic Groups and Related Finite Groups}, pp. 167--266,
Lecture Notes in Math., vol. 131, Springer-Verlag, Berlin–Heidelberg–New York, 1970.

\bibitem[UY]{UY} K. Uhlenbeck and S.-T. Yau, On the
existence of Hermitian--Yang--Mills connections on stable
vector bundles, {\it Commun. Pure Appl. Math.}
\textbf{39} (1986), 257--293.

\end{thebibliography}
\end{document}